\documentclass[a4paper, 12point]{article}
\usepackage{}
\newcommand{\be}{\begin{equation}}
\newcommand{\ee}{\end{equation}}
\newcommand{\bea}{\begin{eqnarray}}
\newcommand{\eea}{\end{eqnarray}}

\setlength{\topmargin}{-.5in} \setlength{\textheight}{9in}
\setlength{\oddsidemargin}{.125in}
\setlength{\textwidth}{6.25in}
\usepackage{graphicx,epsfig}
\usepackage{enumerate}
\usepackage{amssymb}
\usepackage{mathrsfs}
\usepackage{amsthm}
\usepackage{amsmath}
\usepackage{setspace}
\usepackage{endnotes}
\onehalfspacing

\newtheorem{thm}{Theorem}[section]
\theoremstyle{definition}

\theoremstyle{lemma}

\theoremstyle{example}

\newtheorem{rmrk}{Remark}[section]
\theoremstyle{illustration}

\theoremstyle{proposition}

\theoremstyle{corollary}
\newtheorem{cor}{Corollary}[section]
\numberwithin{equation}{section}
\begin{document}
\date{}
\title{\textbf{Difference Sequence Spaces Derived by Using Generalized Means}}
\author{Atanu Manna\footnote{Corresponding author e-mail: atanumanna@maths.iitkgp.ernet.in}, Amit Maji\footnote{Author's e-mail: amit.iitm07@gmail.com}, P. D. Srivastava \footnote{Author's e-mail: pds@maths.iitkgp.ernet.in}\\
\textit{\small{Department of Mathematics, Indian Institute of Technology, Kharagpur}} \\
\textit{\small{Kharagpur 721 302, West Bengal, India}}}
\maketitle
\vspace{20pt}
\begin{center}\textbf{Abstract}\end{center}
This paper deals with new sequence spaces $X(r, s, t ;\Delta) $ for $X\in \{l_\infty, c, c_0\}$ defined by using generalized means and difference operator. It is shown that these spaces are complete normed linear spaces and the spaces $X(r, s, t ;\Delta)$ for $X\in \{c, c_0\}$ have Schauder basis. Furthermore, the $\alpha$-, $\beta$-, $\gamma$- duals of these sequence spaces are computed and also established necessary and sufficient conditions for matrix transformations from $X(r, s, t ;\Delta)$ to $X$.\\
\textit{2010 Mathematics Subject Classification}: 46A45, 46A35.\\
\textit{Keywords:} Sequence spaces; Difference operator; Generalized means; $\alpha$-, $\beta$-, $\gamma$- duals; Matrix transformations.

\section{Introduction}
The study of sequence spaces has importance in the several branches of analysis, namely, the structural theory of topological vector spaces, summability theory, Schauder basis theory etc. Besides this, the theory of sequence spaces is a powerful tool for obtaining some topological and geometrical results using Schauder basis.

Let $w$ be the space of all real or complex sequences $x=(x_n)$, $n\in \mathbb{N}_0$. For an infinite matrix $A$ and a sequence space $\lambda$, the matrix domain of $A$, which is denoted by $\lambda_{A}$ and defined as $\lambda_A=\{x\in w: Ax\in \lambda\}$ \cite{WIL}. Basic methods, which are used to determine the topologies, matrix transformations and inclusion relations on sequence spaces can also be applied to study the matrix domain $\lambda_A$. In recent times, there is an approach of forming new sequence spaces by using matrix domain of a suitable matrix and characterize the matrix mappings between these sequence spaces.

Kizmaz first introduced and studied the difference sequence spaces in \cite{KIZ}. Later on,
several authors including Ahmad and Mursaleen \cite{AHM}, \c{C}olak
and Et \cite{COL}, Ba\c{s}ar and Altay \cite{ALT}, Orhan \cite{ORH},
Polat and Altay \cite{POL1}, Aydin and Ba\c{s}ar \cite{AYD} etc. have introduced and studied new sequence spaces
defined by using difference operator.

On the other hand, sequence spaces are also defined by using
generalized weighted means. Some of them can be viewed in Malkowsky
and Sava\c{s} \cite{MAL}, Altay and Ba\c{s}ar \cite{ALT1}. Mursaleen and Noman \cite{MUR} introduced a sequence
space of generalized means, which includes most of the earlier known
sequence spaces. But till $2011$, there was
no such literature available in which a sequence space is generated by
combining both the weighted means and the difference operator. This
was firstly initiated by Polat et al. \cite{POL}. The authors in \cite{POL} have
introduced the sequence spaces $\lambda(u, v; \Delta)$
for $\lambda \in \{ l_{\infty}, c, c_{0} \}$ defined as
\begin{center}
$\lambda(u, v; \Delta)= \big\{x\in w : (G(u, v).\Delta) x\in
\lambda\big\}$,
\end{center}where $u, v\in w$ such that $u_n, v_n \neq 0$ for all $n$ and the matrices $G(u, v)=(g_{nk})$, $\Delta=(\delta_{nk})$ are defined by
\begin{align*}
g_{nk} &= \left\{
\begin{array}{ll}
    u_nv_k & \quad \mbox{~if~} 0\leq k \leq n,\\
    0 & \quad \mbox{~if~} k > n
\end{array}\right.&
\delta_{nk}& = \left\{
\begin{array}{ll}
    0 & \quad \mbox{~if~} 0\leq k <n-1 \\
    (-1)^{n-k} & \quad \mbox{~if~} n-1\leq k \leq n,\\
    0 & \quad \mbox{~if~} k>n,
\end{array}\right.
\end{align*}
respectively.

The aim of this article is to introduce new sequence spaces
defined by using both the generalized means and the difference
operator. We investigate some topological properties as well
as $\alpha$-, $\beta$-, $\gamma$- duals and bases of the new
sequence spaces are obtained. Further, we characterize some matrix
transformations between these new sequence spaces.
\section{Preliminaries}
Let $l_\infty, c$ and $c_0$ be the spaces of all bounded, convergent and null sequences $x=(x_n)$ respectively, with norm $\|x\|_\infty=\displaystyle\sup_{n}|x_n|$. Let  $bs$ and $cs$ be the sequence spaces of all bounded and convergent series respectively. We denote by $e=(1, 1, \cdots)$ and $e_{n}$ for the sequence whose $n$-th term is $1$ and others are zero and $\mathbb{{N_{\rm 0}}}=\mathbb{N}\cup \{0\}$, where $\mathbb{N}$ is the set of all natural numbers.
A sequence $(b_n)$ in a normed linear space $(X,
\|.\|)$ is called a Schauder basis for $X$ if for every $x\in X$
there is a unique sequence of scalars $(\mu_n)$ such that
\begin{center}
$\Big\|x-\displaystyle\sum_{n=0}^{k}\mu_nb_n\Big\|\rightarrow0$ as $k\rightarrow\infty$,
\end{center}
i.e., $x=\displaystyle\sum_{n=0}^{\infty}\mu_nb_n$.
 (\cite{WIL}, \cite{MAD}).\\
For any subsets $U$ and $V$ of $w$, the multiplier space $M(U, V)$ of $U$ and $V$ is defined as
\begin{center}
$M(U, V)=\{a=(a_n)\in w : au=(a_nu_n)\in V ~\mbox{for all}~ u\in U\}$.
\end{center}
In particular,
\begin{center}
$U^\alpha= M(U, l_1)$, $U^\beta= M(U, cs)$ and $U^\gamma= M(U, bs)$
\end{center} are called the $\alpha$-, $\beta$- and $\gamma$- duals of $U$ respectively \cite{MAL1}.

Let $A=(a_{nk})_{n, k}$ be an infinite matrix
with real or complex entries $a_{nk}$. We write $A_n$ as the
sequence of the $n$-th row of $A$, i.e.,
$A_n=(a_{nk})_{k}$ for every $n$.
For $x=(x_n)\in w$, the $A$-transform of $x$ is defined as the
sequence $Ax=((Ax)_n)$, where
\begin{center}
$A_n(x)=(Ax)_n=\displaystyle\sum_{k=0}^{\infty}a_{nk}x_k$,
\end{center}
provided the series on the right side converges for each $n$. For any two sequence spaces $U$ and $V$, we denote by $(U, V)$, the class of all infinite matrices $A$ that map $U$ into $V$. Therefore $A\in (U, V)$ if and only if $Ax=((Ax)_n)\in V$ for all $x\in U$. In other words, $A\in (U, V)$ if and only if $A_n \in U^\beta$ for all $n$ \cite{WIL}.
An infinite matrix $T={(t_{nk})}_{n,k}$ is said to be triangle if $t_{nk}=0$ for $k>n$ and $t_{nn}\neq 0$, $n\in \mathbb{{N_{\rm 0}}}$.
\section{Sequence spaces $X(r, s, t; \Delta)$ for $X \in \{ l_{\infty}, c, c_{0}\}$}
In this section, we first begin with the notion of generalized means given by Mursaleen et al. \cite{MUR}.\\

We denote the sets $\mathcal{U}$ and $\mathcal{U}_{0}$ as
\begin{center}
$ \mathcal{U} = \Big \{ u =(u_{n})_{n=0}^{\infty} \in w: u_{n} \neq
0~~ {\rm for~ all}~~ n \Big \}$ and $ \mathcal{U_{\rm 0}} = \Big \{ u
=(u_{n})_{n=0}^{\infty} \in w: u_{0} \neq 0 \Big \}.$
\end{center}
Let $r, t \in \mathcal{U}$ and $s \in \mathcal{U}_{0}$. The sequence $y=(y_{n})$ of generalized means of a sequence $x=(x_{n})$ is defined
by $$ y_{n}= \frac{1}{r_{n}}\sum_{k=0}^{n} s_{n-k}t_{k}x_{k} \qquad (n \in \mathbb{N_{\rm 0}}).$$
The infinite matrix $A(r, s, t)$ of generalized means is defined by

$$(A(r,s,t))_{nk} = \left\{
\begin{array}{ll}
    \frac{s_{n-k}t_{k}}{r_{n}} & \quad 0\leq k \leq n,\\
    0 & \quad k > n.
\end{array}\right. $$

Since $A(r, s, t)$ is a triangle, it has a unique inverse and the
inverse is also a triangle \cite{JAR}. Take $D_{0}^{(s)} =
\frac{1}{s_{0}}$ and

$ D_{n}^{(s)} =
\frac{1}{s_{0}^{n+1}} \left|
\begin{matrix}
    s_{1} & s_{0} &  0 & 0 \cdots & 0 \\
    s_{2} & s_{1} & s_{0}& 0 \cdots & 0 \\
    \vdots & \vdots & \vdots & \vdots    \\
    s_{n-1} & s_{n-2} & s_{n-3}& s_{n-4} \cdots & s_0 \\
      s_{n} & s_{n-1} & s_{n-2}& s_{n-3} \cdots & s_1
\end{matrix} \right| \qquad \mbox{for}~ n =1, 2, 3, \cdots $ \\ \\

Then the inverse of $A(r, s, t)$ is the triangle $B= (b_{nk})_{n, k}$ which is defined as
$$b_{nk} = \left\{
\begin{array}{ll}
    (-1)^{n-k}~\frac{D_{n-k}^{(s)}}{t_{n}}r_{k} & \quad 0\leq k \leq n,\\
    0 & \quad k > n.
\end{array}\right. $$
We now introduce the sequence spaces $X(r, s, t; \Delta)$ for $X \in \{ l_{\infty}, c, c_{0} \}$ as
$$ X(r, s,t; \Delta)= \Big \{ x=(x_{n})\in w : \Big( \frac{1}{r_{n}}\sum_{k=0}^{n}s_{n-k}t_{k} \Delta {x_{k}}\Big)_{n} \in X \Big  \},$$
which is a combination of the generalized means and the difference operator $\Delta$ such that $\Delta x_k=x_k-x_{k-1}$, $x_{-1}=0$. By using matrix domain, we can write $X(r, s,t; \Delta)=  X_{A(r, s,t; \Delta)}=\{x \in w :A(r, s,t; \Delta)x\in X\}$, where $A(r, s,t; \Delta)= A(r, s,t). \Delta$, product of two triangles $A(r, s,t)$ and $\Delta$.\\ \\
These sequence spaces include many known sequence spaces studied by several authors. For examples,
\begin{enumerate}[I.]
\item if $r_{n}=\frac{1}{u_{n}}$, $t_{n}=v_{n}$ and $s_{n}=1~ \forall~ n$, then the sequence spaces $X(r, s,t; \Delta)$ reduce to $ X(u, v; \Delta)$ for $X \in \{ l_{\infty}, c, c_{0} \}$ introduced and studied by Polat et al. \cite{POL}

\item if $t_{n}=1$, $s_{n}=1$ ~$\forall~ n$ and $r_{n}=n+1$, then the sequence space $ X(r, s,t; \Delta)$ for $X=l_\infty$ reduces to $ ces_\infty(\Delta)$ studied by Orhan \cite{ORH}.

\item if $r_{n}= \frac{1}{n!}, $ $t_{n}=\frac{\alpha^n}{n!}$, $s_{n}=\frac{(1-\alpha)^n}{n!}$, where $0<\alpha<1$, then the sequence spaces $ X(r, s,t; \Delta)$ for $X \in \{l_\infty, c, c_0 \}$ reduce to $e^\alpha_\infty(\Delta)$,
$e^\alpha_c(\Delta)$ and $e^\alpha_0(\Delta)$ respectively \cite{POL1}.
\item if $r_{n}=n+1,$ $t_{n}={1+ \alpha^n}$, where $0<\alpha<1$ and $s_{n}=1~\forall n$, then the sequence spaces $ X(r, s,t; \Delta)$ for $X \in \{c, c_0 \}$
 reduce to the spaces of sequences $a_{c}^{\alpha}(\Delta)$ and $a_{0}^{\alpha}(\Delta)$ studied by Aydin and Ba\c{s}ar \cite{AYD}. For $X=l_\infty$, the sequence space $ X(r, s,t; \Delta)$ reduces to $a_{\infty}^{\alpha}(\Delta)$ studied by Djolovi\'{c} \cite{DJO}.
\end{enumerate}
\section{Main results}
In this section, we begin with some topological results of the
newly defined sequence spaces.
\begin{thm}
The sequence spaces $X(r,s, t; \Delta)$ for $X\in \{l_\infty, c, c_0
\}$ are complete normed linear spaces under the norm defined by
\begin{center}
$\|x\|_{X(r, s, t; \Delta)}=\displaystyle\sup_n\Big|\frac{1}{r_n}\displaystyle\sum_{k=0}^{n}s_{n-k}t_k\Delta x_k\Big|=\displaystyle\sup_n|(A(r, s, t; \Delta)x)_n| $.
\end{center}
\end{thm}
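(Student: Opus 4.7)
The plan is to verify the three norm axioms and then establish completeness by transporting the problem to $X\in\{l_\infty,c,c_0\}$, which is already known to be complete under the supremum norm.

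For the norm axioms, homogeneity and the triangle inequality follow immediately from the linearity of the map $x\mapsto A(r,s,t;\Delta)x$ and the corresponding properties of the supremum norm $\|\cdot\|_\infty$ on $X$; these are routine. The only subtlety is positive definiteness: I must show $\|x\|_{X(r,s,t;\Delta)}=0$ forces $x=0$. For this I would use that $A(r,s,t;\Delta)=A(r,s,t)\cdot\Delta$ is a product of two triangular matrices and hence itself a triangle with nonzero diagonal entries (the diagonal of $A(r,s,t)$ is $s_0 t_n/r_n$ and of $\Delta$ is $1$). Any triangle is invertible, so $A(r,s,t;\Delta)x=0$ implies $x=0$ coordinate by coordinate (one can read off $x_0, x_1, \dots$ recursively).

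For completeness, I would take an arbitrary Cauchy sequence $(x^{(i)})_{i\in\mathbb{N}_0}$ in $X(r,s,t;\Delta)$ and set $y^{(i)}=A(r,s,t;\Delta)x^{(i)}\in X$. Because the norm on $X(r,s,t;\Delta)$ is precisely $\|A(r,s,t;\Delta)\cdot\|_\infty$, the sequence $(y^{(i)})$ is Cauchy in $X$ with the sup-norm. Since $l_\infty$, $c$ and $c_0$ are each complete under $\|\cdot\|_\infty$, there exists $y\in X$ with $\|y^{(i)}-y\|_\infty\to 0$. I would then define $x=By$, where $B$ is the triangle inverse of $A(r,s,t;\Delta)$ (whose existence was noted above; concretely $B$ is computed by multiplying the inverse of $\Delta$, namely the summation matrix, with the inverse of $A(r,s,t)$ recorded in the paper). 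By construction $A(r,s,t;\Delta)x=y\in X$, so $x\in X(r,s,t;\Delta)$, and
\[
\|x^{(i)}-x\|_{X(r,s,t;\Delta)}=\|A(r,s,t;\Delta)(x^{(i)}-x)\|_\infty=\|y^{(i)}-y\|_\infty\longrightarrow 0.
\]
Thus $x^{(i)}\to x$ in $X(r,s,t;\Delta)$ and the space is complete.

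The only step requiring any real thought is the positive-definiteness check and, in the completeness argument, confirming that the candidate limit $x=By$ actually lives in $X(r,s,t;\Delta)$; both of these reduce to the observation that $A(r,s,t;\Delta)$ is a triangle and hence bijective on $w$. The rest is a direct transfer of the Banach-space structure of $(X,\|\cdot\|_\infty)$ through an isometric isomorphism, which is the standard pattern for matrix-domain constructions. I would therefore expect to present the proof in two short paragraphs: one verifying the norm axioms (isolating the invertibility point) and one executing the Cauchy-sequence argument as above.
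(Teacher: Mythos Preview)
Your proposal is correct and follows essentially the same strategy as the paper: verify the norm axioms (the paper also waves this off as ``easy,'' so your explicit remark that positive definiteness follows from $A(r,s,t;\Delta)$ being a triangle is if anything more careful), then prove completeness by pushing a Cauchy sequence through the map $A(r,s,t;\Delta)$ into $X$.

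The one presentational difference is in the completeness step. The paper argues coordinate-wise: from the Cauchy condition it extracts, for each fixed $k$, that $\big((A(r,s,t;\Delta)x^{m})_k\big)_m$ is Cauchy in the scalars, obtains a coordinate-wise limit, and then shows membership in $X(r,s,t;\Delta)$ via the triangle inequality $\|x\|\le\|x^{m}\|+\|x^{m}-x\|$. You instead invoke completeness of $(X,\|\cdot\|_\infty)$ in one stroke to get $y\in X$, and then pull back through the explicit inverse $B$. Your route is cleaner and makes the isometric-isomorphism structure (which the paper only records separately in its Theorem~4.2) do the work directly; the paper's route avoids naming the inverse but is otherwise the same argument unrolled. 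Either is fine.
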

\begin{proof}
Let $u$, $v\in X(r, s, t; \Delta)$ and $\alpha, \beta$ be any two
scalars. Then
\begin{center}
$\displaystyle\sup_{n}\bigg|\frac{1}{r_n}\displaystyle\sum_{k=0}^{n}s_{n-k}t_k\Delta(\alpha
u_k + \beta v_k) \bigg|\leq
|\alpha|\displaystyle\sup_{n}\bigg|\frac{1}{r_n}\displaystyle\sum_{k=0}^{n}s_{n-k}t_k\Delta
u_k\bigg| +
|\beta|\displaystyle\sup_{n}\bigg|\frac{1}{r_n}\displaystyle\sum_{k=0}^{n}s_{n-k}t_k\Delta
v_k\bigg|$
\end{center}
and hence $\alpha u + \beta v\in X(r, s, t; \Delta)$. Therefore $X(r, s, t; \Delta)$ is a linear space. It is easy to show that the functional $\|.\|_{X(r, s, t; \Delta)}$ defined above gives a norm on the linear space $X(r, s, t; \Delta)$.\\
To show completeness, let $(x^m)$ be a Cauchy sequence in $X(r, s, t; \Delta)$, where $x^{m}= (x_k^{m})=(x_0^{m}, x_1^{m}, x_2^{m}, \ldots )$ $\in X(r, s, t; \Delta)$, for each $m\in \mathbb{N}_0$. Then for every $\epsilon>0$ there exists $n_0\in \mathbb{N}$ such that
\begin{center}
$\|x^{m}-x^{l}\|_{X(r, s, t; \Delta)}<\epsilon$ ~ for $m, l\geq n_0$.
\end{center}
The above implies that for each $k\in \mathbb{N_{\rm 0}}$,
\begin{equation}
|A(r, s, t; \Delta)(x_k^{m}-x_k^{l})|<\epsilon  ~~\mbox{for all}~ m, l\geq n_0.
\end{equation}
Therefore $((A(r, s, t). \Delta)x_k^{m})$ is a Cauchy
sequence of scalars for each $k\in \mathbb{N_{\rm 0}}$ and hence $((A(r, s, t). \Delta)x_k^{m})$ converges for each $k$. We write
\begin{center}
$\displaystyle\lim_{m\rightarrow\infty}((A(r, s, t). \Delta)x_k^{(m)}) =
((A(r, s, t). \Delta)x_k)$, \quad $k\in \mathbb{N}_0$.
\end{center}
Letting $l\rightarrow\infty$ in $(4.1)$, we obtain
\begin{equation}
\Big|A(r, s, t; \Delta)(x_k^{m}-x_k)\Big|<\epsilon  ~\mbox{for
all}~ m \geq n_0 ~\mbox{and each}~ k\in \mathbb{N_{\rm
0}}.
\end{equation}
Hence by definition $\|x^{m}-x\|_{X(r, s, t; \Delta)}<\epsilon$
for all $m\geq n_0$. Next we show that $x \in X(r,s,t;
\Delta)$. Consider
\begin{center}
$\|x\|_{X(r, s, t; \Delta)}\leq \|x^{m}\|_{X(r, s, t; \Delta)} + \|x^{m}-x\|_{X(r, s, t; \Delta)}$,
\end{center}
which is finite for $m\geq n_0$ and hence $x\in X(r, s, t; \Delta)$. This completes the proof.
\end{proof}

\begin{thm}
The sequence spaces $X(r,s,t; \Delta)$, where $X \in \{ l_{\infty}, c, c_{0} \}$ are linearly isomorphic to the spaces $X \in \{ l_{\infty}, c, c_{0} \}$
respectively i.e. $l_{\infty}(r, s, t; \Delta) \cong l_{\infty}$,  $c(r, s, t; \Delta) \cong c$ and $c_{0}(r, s, t; \Delta) \cong c_{0}$.
\end{thm}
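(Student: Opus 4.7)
The plan is to exhibit an explicit linear isometric bijection $T: X(r,s,t;\Delta) \to X$ for each $X \in \{l_\infty, c, c_0\}$, namely the map $x \mapsto A(r,s,t;\Delta)x$. Once this is done, linear isomorphism follows immediately, and in fact the map is even norm-preserving.

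First I would set $T(x) = y$ where $y_n = (A(r,s,t;\Delta)x)_n = \frac{1}{r_n}\sum_{k=0}^n s_{n-k} t_k \Delta x_k$. Linearity of $T$ is clear from the definition of $A(r,s,t;\Delta)$ as a matrix operator, and by the very definition of the space $X(r,s,t;\Delta)$, we have $T(x) \in X$ whenever $x \in X(r,s,t;\Delta)$, so $T$ is well-defined. Norm preservation is built into the definition of $\|\cdot\|_{X(r,s,t;\Delta)}$: indeed $\|T(x)\|_\infty = \sup_n |y_n| = \|x\|_{X(r,s,t;\Delta)}$. From this isometry property, injectivity of $T$ is automatic (only the zero element has zero norm, noting that $A(r,s,t;\Delta)$ is a triangle and hence has trivial kernel on $w$).

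The main obstacle, and the step that requires real work, is surjectivity: given $y \in X$, I must produce $x \in X(r,s,t;\Delta)$ with $T(x) = y$. The key observation is that $A(r,s,t;\Delta) = A(r,s,t)\cdot \Delta$ is a product of two triangles, each of which is invertible in the algebra of triangles (see \cite{JAR}); the inverse of $A(r,s,t)$ is the triangle $B=(b_{nk})$ displayed in the excerpt, and the inverse of $\Delta$ is the summation matrix $\Sigma=(\sigma_{nk})$ with $\sigma_{nk} = 1$ for $0 \le k \le n$ and $0$ otherwise. Thus I would define, for a given $y \in X$,
\begin{equation*}
x_n \;=\; \sum_{j=0}^{n} \sum_{k=0}^{j} (-1)^{j-k}\,\frac{D_{j-k}^{(s)}}{t_j}\, r_k\, y_k \qquad (n \in \mathbb{N}_0),
\end{equation*}
which is just $x = \Sigma B y$. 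Because these are finite sums, there is no convergence issue in the definition of $x$.

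It then remains to verify that $A(r,s,t;\Delta)x = y$; this reduces to the triangle identity $A(r,s,t)\cdot \Delta \cdot \Sigma \cdot B = I$, which follows from $\Delta \Sigma = I$ and $A(r,s,t)\cdot B = I$ (the latter being the definition of $B$ as the inverse of $A(r,s,t)$). Consequently $T(x) = y \in X$, so $x \in X(r,s,t;\Delta)$ and $T$ is surjective. Combined with the preceding observations, $T$ is a norm-preserving linear bijection, yielding the three isomorphisms $l_\infty(r,s,t;\Delta)\cong l_\infty$, $c(r,s,t;\Delta)\cong c$, and $c_0(r,s,t;\Delta)\cong c_0$ simultaneously, since the same map $T$ serves all three cases and the defining condition ``$A(r,s,t;\Delta)x$ lies in $X$'' is preserved under $T$ for each choice of $X$.
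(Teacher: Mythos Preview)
Your proof is correct and follows essentially the same route as the paper: both define $T$ by $x\mapsto A(r,s,t;\Delta)x$, note linearity and norm preservation, and establish surjectivity by applying the explicit inverse $\Delta^{-1}\cdot A(r,s,t)^{-1}$ to a given $y\in X$. Your formula for $x_n$ is just a re-indexing of the paper's equation~(4.3) (swap the order of summation and substitute $l=j-k$), so the arguments coincide.
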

\begin{proof}
We prove the theorem only for the case $X=l_{\infty}$. To prove this, we need to show that there exists a bijective linear map from $l_{\infty}(r,s,t;\Delta)$ to $l_{\infty}$.\\
 We define a map $T:l_{\infty}(r,s,t;\Delta)\rightarrow l_{\infty}$ by $x\longmapsto Tx=y=(y_n)$, where
$$y_n= \frac{1}{r_{n}}\sum_{k=0}^{n}s_{n-k}t_{k} \Delta {x_{k}}.$$
Since $\Delta$ is a linear operator, so the linearity of $T$ is trivial. It is clear from the definition
that $Tx=0$ implies $x=0$. Thus $T$ is injective. To prove $T$ is surjective, let $y =(y_n)\in l_{\infty}$.
Since $y= (A(r, s, t). \Delta)x$, i.e.,
$$x= \big(A(r, s, t). \Delta\big)^{-1}y=\Delta^{-1}.A(r, s, t)^{-1}y, $$
we can get a sequence $x=(x_{n})$ as
\begin{equation}
x_n = \sum_{j=0}^{n}\sum_{k=0}^{n-j} (-1)^{k} \frac{D_{k}^{(s)}}{t_{k+j}}r_{j}y_{j},  \qquad n \in \mathbb{N_{\rm 0}}.
\end{equation}
Then
$$\|x \|_{l_{\infty}(r,s,t; \Delta)}= \sup_{n }\bigg| \frac{1}{r_{n}}\sum_{k=0}^{n}s_{n-k}t_{k} \Delta {x_{k}}\bigg| = \sup_{n}|y_{n}|= \|y \|_{\infty} < \infty.$$
Thus $x \in l_{\infty}(r, s, t; \Delta)$ and this shows that $T$ is
surjective. Hence $T$ is a linear bijection from $l_{\infty}(r,s,t;
\Delta)$ to $l_{\infty}$. Also $T$ is norm preserving. So $l_{\infty}(r, s, t; \Delta) \cong l_{\infty}$.\\
In the same way, we can prove that $c_{0}(r, s, t; \Delta) \cong c_{0}$, $c(r, s, t; \Delta) \cong c$. This completes the proof.
\end{proof}
Since $X(r,s,t; \Delta) \cong X$ for $X \in \{c_{0}, c\}$, the Schauder bases of the sequence spaces $X(r,s,t; \Delta)$ are the inverse image of the bases of $X$ for $X \in \{c_{0}, c\}$. So, we have the following theorem without proof.
\begin{thm}
Let $\mu_k=(A(r,s, t; \Delta)x)_k$ for all $k\in \mathbb{N_{\rm
0}}$. Define the sequences $b^{(j)}=(b_{n}^{(j)}), j\in \mathbb{N}_0$ and $b_n^{(-1)}$ as
\begin{align*}
b_n^{(j)} &= \left\{
\begin{array}{ll}
 \displaystyle\sum_{k=0}^{n-j} (-1)^k \frac{D_k^{(s)}}{t_{k+j}}r_j& \mbox{if}~~   0\leq j \leq n \\
0 & \mbox{if} ~~ j>n.
\end{array}\right.& \mbox{and~~}
b_n^{(-1)} & = \displaystyle\sum_{j=0}^{n} \displaystyle\sum_{k=0}^{n-j} (-1)^k \frac{D_k^{(s)}}{t_{k+j}}r_j.
\end{align*}

Then the followings are true:\\
 $(i)$ The sequence $(b^{(j)})_{j=0}^{\infty}$ is a basis for the space $c_0(r, s,t; \Delta)$ and any sequence $x\in c_0(r, s,t; \Delta)$ has a unique representation of the form
\begin{center}
$x=\displaystyle\sum_{j=0}^{\infty}\mu_jb^{(j)}$.
\end{center}
$(ii)$ The sequence $(b^{(j)})_{j=-1}^{\infty}$ is a basis for the space $c(r,
s,t; \Delta)$ and any $x\in c(r, s,t; \Delta)$ has a unique
representation of the form
\begin{center}
$x=\ell b_n^{(-1)} + \displaystyle\sum_{j=0}^{\infty}(\mu_j-\ell)b^{(j)}$,
\end{center}where $\ell=\displaystyle\lim_{n\rightarrow\infty}(A(r,s, t; \Delta)x)_n$.
\end{thm}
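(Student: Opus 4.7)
The plan is to transfer the canonical Schauder bases of $c_0$ and $c$ through the norm-preserving linear bijection $T\colon X(r,s,t;\Delta)\to X$ built in Theorem 4.2. Since $T$ is an isometric isomorphism, both $T$ and $T^{-1}$ are continuous, so $\{T^{-1}(f_j)\}$ is a Schauder basis of $X(r,s,t;\Delta)$ whenever $\{f_j\}$ is a Schauder basis of $X$, and the coefficient scalars match.

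First I would identify $T^{-1}$ on the canonical basis vectors. The explicit inverse formula (4.3) reads
\[
x_n \;=\; \sum_{j=0}^{n}\sum_{k=0}^{n-j}(-1)^{k}\,\frac{D_k^{(s)}}{t_{k+j}}\,r_j\, y_j .
\]
Substituting $y=e^{(p)}$ (the sequence with a single $1$ in position $p$) collapses the outer sum to the single term $j=p$, producing precisely $b_n^{(p)}$ for $p\le n$ and $0$ for $p>n$; substituting $y=e=(1,1,\dots)$ keeps the full outer sum, giving $b_n^{(-1)}$. Hence $T^{-1}(e^{(j)})=b^{(j)}$ and $T^{-1}(e)=b^{(-1)}$.

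For part $(i)$, recall that $(e^{(j)})_{j\ge 0}$ is the canonical Schauder basis of $c_0$ with unique expansion $y=\sum_{j=0}^{\infty}y_j e^{(j)}$ in sup norm. Given $x\in c_0(r,s,t;\Delta)$, set $y=Tx$, so $\mu_j=y_j=(A(r,s,t;\Delta)x)_j$. Applying $T^{-1}$ to the partial sums and using norm preservation yields $x=\sum_{j=0}^{\infty}\mu_j b^{(j)}$ convergent in $\|\cdot\|_{X(r,s,t;\Delta)}$. Uniqueness is immediate: any competing representation $x=\sum_j \mu_j' b^{(j)}$ pushes forward under $T$ to $y=\sum_j \mu_j' e^{(j)}$ in $c_0$, forcing $\mu_j'=y_j=\mu_j$.

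For part $(ii)$, use the canonical basis $(e,e^{(0)},e^{(1)},\dots)$ of $c$: if $y\in c$ with $\lim y_n=\ell$, then $y=\ell e+\sum_{j=0}^{\infty}(y_j-\ell)e^{(j)}$ in sup norm. Taking $\ell=\lim_n (A(r,s,t;\Delta)x)_n$ and applying $T^{-1}$ termwise, together with the identifications from the first step, yields the stated representation; uniqueness follows by the same push-forward argument. The main obstacle, such as it is, is the bookkeeping verification $T^{-1}(e^{(j)})=b^{(j)}$ and $T^{-1}(e)=b^{(-1)}$ from formula (4.3); all conceptual content is already packaged into Theorem 4.2, which is why the authors omit the proof.
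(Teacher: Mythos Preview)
Your proposal is correct and follows exactly the approach the paper indicates: the paper itself omits the proof, merely noting before the theorem that since $X(r,s,t;\Delta)\cong X$ for $X\in\{c_0,c\}$, the Schauder bases of $X(r,s,t;\Delta)$ are the inverse images under $T$ of the canonical bases of $X$. You have simply written out the details of that transfer, including the verification from (4.3) that $T^{-1}(e^{(j)})=b^{(j)}$ and $T^{-1}(e)=b^{(-1)}$.
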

\begin{rmrk}
In particular, if we choose $r_{n}=\frac{1}{u_{n}}$,
$t_{n}=v_{n}$, $s_{n}=1$, $\forall~n$ then the sequence spaces $ X(r,
s,t; \Delta)$ reduce to $ X(u, v; \Delta)$ for $X \in \{
l_{\infty}, c, c_{0} \}$ \cite{POL}. With this choice of $(s_{n})$,
we have $D_{0}^{(s)} =D_{1}^{(s)}=1$ and $D_{n}^{(s)} =0 $ for $ n
\geq 2$. Thus the sequences $b^{(j)}=(b_{n}^{(j)}), j\in \mathbb{N}_0$ and $b_n^{(-1)}$ reduce to
\begin{align*}
 b_{n}^{(j)}&= \left\{
     \begin{array}{ll}
        {  \frac{1}{u_j}\Big(\frac{1}{v_j} - \frac{1}{v_{j+1}}\Big) }& \mbox{if}~~ \quad 0\leq j < n \\
        \frac{1}{u_{n}v_{n}}              & \mbox{if}~~  \quad  j=n\\
        0                                 & \mbox{if}~~ \quad j > n.
        \end{array}\right.& \mbox{and~~}
b_n^{(-1)} & = \displaystyle\sum_{j=0}^{n-1} \frac{1}{u_j}\Big(\frac{1}{v_j}-\frac{1}{v_{j+1}}\Big)+ \frac{1}{u_{n}v_{n}}  .
\end{align*}
The sequences $(b^{(j)})_{j=0}^{\infty}$ and $(b^{(j)})_{j=-1}^{\infty}$ are the bases for the spaces $c_{0}(u,v; \Delta)$ and $c(u,v; \Delta)$ respectively \cite{POL}.
\end{rmrk}

\par Let $\mathcal{F}$ be the collection of all nonempty finite subsets of the set of all natural numbers and
$A=(a_{nk})_{n,k}$ be an infinite matrix satisfying the conditions:\\
\begin{align}
&\sup_{K \in \mathcal{F}} \sum_{n=0}^{\infty}\Big|\sum_{k \in K}a_{nk}\Big| < \infty\\
&\displaystyle\sup_{n} \sum_{k=0}^{\infty}|a_{nk}| < \infty
\end{align}
\begin{align}
&\displaystyle\lim_{n} \sum_{k=0}^{\infty}|a_{nk}| =0\\
&\displaystyle\lim_{n} a_{nk} =0 \mbox{~for all~} k\\
&\displaystyle\lim_{n} \sum_{k=0}^{\infty}a_{nk}=0\\
&\displaystyle\lim_{n} a_{nk} \mbox{~exists for all~} k\\
&\displaystyle\lim_{n} \sum_{k=0}^{\infty}|a_{nk}- \lim_n a_{nk}| =0\\
&\displaystyle\lim_{n} \sum_{k=0}^{\infty}a_{nk} \mbox{~exists~}
\end{align}

We now state some results of Stieglitz and Tietz \cite{STI} which are required to obtain the duals and to characterize some matrix transformations.
\begin{thm} \cite{STI}
$(a)$ $A \in (c_0, l_1), A \in (c,l_1), A \in (l_{\infty}, l_1)$ if and only if $(4.4)$ holds.\\
$(b)$ $A \in (c_{0}, l_{\infty}), A \in (c, l_{\infty}), A \in (l_{\infty}, l_{\infty})$ if and only if $(4.5)$ holds.\\
$(c)$ $A \in (c_0, c_0)$ if and only if $(4.5)$ and $(4.7)$ hold.\\
$(d)$ $A \in (l_{\infty}, c_0)$ if and only if $(4.6)$ holds.\\
$(e)$ $A \in (c,c_0)$ if and only if $(4.5)$, $(4.7)$ and $(4.8)$ hold.\\
$(f)$ $A \in (c_0, c)$ if and only if $(4.5)$ and $(4.9)$ hold.\\
$(g)$ $A \in (l_{\infty}, c)$ if and only if $(4.5)$, $(4.9)$ and $(4.10)$ hold.\\
$(h)$ $A \in (c, c)$ if and only if $(4.5)$, $(4.9)$ and $(4.11)$ hold.
\end{thm}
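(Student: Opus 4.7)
The plan is to handle each pair $(U,V)$ by establishing necessity via the action of $A$ on a suitable family of test sequences in $U$, and sufficiency by bounding or decomposing the partial sums of $Ax$ using the condition together with the structural information carried by $x \in U$. I would begin with part (b), which is the bedrock row-norm condition $\sup_n \sum_k |a_{nk}| < \infty$. Sufficiency across all three domains $c_0, c, l_\infty$ is the routine estimate $|(Ax)_n| \le \|x\|_\infty \sum_k |a_{nk}|$. Necessity on $c_0$ follows by testing $A$ on truncated signed unit-vector sums and invoking uniform boundedness via $c_0^{\beta} = l_1$; the condition then holds \emph{a fortiori} when the domain is enlarged to $c$ or $l_\infty$.

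Part (a) is dual in spirit: necessity comes from testing $A$ against $\sum_{k\in K} \varepsilon_k e_k$ with signs $\varepsilon_k$ chosen to align each partial sum $\sum_{k\in K} a_{nk}$, while sufficiency from (4.4) together with the density of finitely supported vectors in $c_0$ extends by the uniform bound to $c$ and $l_\infty$. For (c)--(f) I would combine (b) with coordinatewise limit information extracted by feeding $A$ the vectors $e_k$ (giving the column limits via $Ae_k$) and $e$ (giving a row-sum limit via $Ae$). A swap-of-limits argument, splitting $x \in c_0$ into a finite head and a small tail controlled by (4.5) and then letting the truncation tend to infinity, converts these pointwise facts into convergence of the whole sequence $Ax$ in the appropriate $V$; the presence of (4.8) in (e) records exactly the contribution from $Ae$ being forced to vanish.

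The delicate cases are (d) and (g), because $l_\infty$ has no Schauder basis and pointwise test sequences no longer span densely. For (d), sufficiency is the direct estimate $|(Ax)_n| \le \|x\|_\infty \sum_k |a_{nk}| \to 0$; necessity is the hard part, and I would run a gliding-hump construction producing a $\pm 1$-valued $x \in l_\infty$ whose $Ax$ stays bounded away from $0$ along a subsequence on which (4.6) fails. Part (g) is the main obstacle: I would set $\tilde{a}_{nk} = a_{nk} - \lim_n a_{nk}$ and observe that $\tilde{A} \in (l_\infty, c_0)$ is, by (d), precisely the statement (4.10); combined with (4.9) and (4.11) this forces the column-limit sequence to behave well enough that $\lim_n (Ax)_n$ can be assembled as $\sum_k x_k \lim_n a_{nk}$ plus a residual term controlled by (4.10). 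Finally (h) follows by restricting the (g) argument to $c$: the gliding-hump pathology disappears, (4.10) weakens to (4.5), and (4.11) furnishes the row-sum limit needed to evaluate $\lim_n (Ax)_n$ on the constant-sequence component of $x \in c$ via the usual Silverman--Toeplitz bookkeeping.
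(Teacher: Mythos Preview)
The paper does not prove this theorem: it is stated as a quotation of classical results from Stieglitz and Tietz \cite{STI} (note the citation in the theorem header and the sentence preceding it, ``We now state some results of Stieglitz and Tietz \cite{STI} which are required\ldots''), and no argument is given anywhere in the text. So there is nothing in the paper to compare your proposal against.

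That said, your outline is broadly the standard route through these characterizations and would work. One small slip: in your discussion of part (g) you invoke (4.11), but the stated conditions for $(l_\infty,c)$ are (4.5), (4.9) and (4.10) only; you presumably meant (4.5), which you need in order to guarantee that the column-limit sequence $(\lim_n a_{nk})_k$ lies in $l_1$ so that $\sum_k x_k \lim_n a_{nk}$ makes sense for every $x\in l_\infty$. With that correction the reduction $\tilde A\in(l_\infty,c_0)\Longleftrightarrow(4.10)$ via part (d) is exactly the right move.
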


\subsection{The $\alpha$-dual, $\gamma$-dual of $X(r,s, t; \Delta)$ for $X\in\{l_\infty, c, c_0 \}$}
\begin{thm}
The $\alpha$-dual of the space $X(r,s,t; \Delta)$ for $X\in\{l_\infty, c, c_0\}$ is the set
$$ \Lambda = \Big \{ a=(a_{n}) \in w: \sup_{K \in \mathcal{F}} \sum_{n}\Big|\sum_{j \in K}\sum_{k=0}^{n-j} (-1)^{k} \frac{D_{k}^{(s)}}{t_{k+j}}r_{j}a_{n}\Big| < \infty \Big \}.$$
\end{thm}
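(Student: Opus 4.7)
The plan is to reduce the characterization of the $\alpha$-dual to a matrix-mapping problem of the form $(X, l_1)$ and then invoke part $(a)$ of Theorem 4.1. Writing $y = A(r,s,t;\Delta)x$, Theorem 4.2 gives the explicit inverse
\[
x_n = \sum_{j=0}^{n}\sum_{k=0}^{n-j}(-1)^{k}\,\frac{D_{k}^{(s)}}{t_{k+j}}\,r_{j}\,y_{j}, \qquad n\in \mathbb{N}_0,
\]
and the correspondence $x\mapsto y$ is a linear bijection between $X(r,s,t;\Delta)$ and $X$.

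For a fixed $a=(a_n)\in w$, the statement $a\in X(r,s,t;\Delta)^{\alpha}$ means $ax = (a_n x_n)\in l_1$ for every $x\in X(r,s,t;\Delta)$. I would substitute the inversion formula into $a_n x_n$, so that
\[
a_n x_n \;=\; \sum_{j=0}^{n} c_{nj}\, y_j, \qquad \text{where } c_{nj} = \sum_{k=0}^{n-j}(-1)^{k}\,\frac{D_{k}^{(s)}}{t_{k+j}}\,r_{j}\,a_{n} \text{ for } j\leq n,
\]
and $c_{nj}=0$ for $j>n$. By the bijection of Theorem 4.2, requiring $(a_n x_n)\in l_1$ for all $x\in X(r,s,t;\Delta)$ is equivalent to requiring $(Cy)_n = \sum_{j} c_{nj} y_j$ to define an element of $l_1$ for every $y\in X$, i.e. $C=(c_{nj})\in (X,l_1)$.

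Now I invoke Theorem 4.1(a): for each of $X\in\{c_0,c,l_\infty\}$, one has $C\in (X,l_1)$ if and only if condition $(4.4)$ holds for the matrix $C$. Spelling out $(4.4)$ with the expression for $c_{nj}$ gives exactly
\[
\sup_{K\in\mathcal{F}}\sum_{n}\Bigl|\sum_{j\in K}\sum_{k=0}^{n-j}(-1)^{k}\,\frac{D_{k}^{(s)}}{t_{k+j}}\,r_{j}\,a_{n}\Bigr| <\infty,
\]
which is the defining condition of the set $\Lambda$. Hence $X(r,s,t;\Delta)^{\alpha}=\Lambda$ for each of the three choices of $X$.

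The only delicate point I foresee is justifying the reduction $ax\in l_1 \;\Longleftrightarrow\; Cy\in l_1$ cleanly. The implication from left to right uses that every $y\in X$ arises from some $x$ via the triangle inverse, so there is no convergence issue in the finite inner sum. For the reverse direction, since $c_{nj}=0$ for $j>n$, the series $\sum_j c_{nj}y_j$ reduces to a finite sum for each $n$, so no interchange-of-summation argument is needed beyond the triangularity already built into the definition. That keeps the argument essentially formal and routes the entire content through Theorem 4.1(a).
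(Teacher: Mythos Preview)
Your proof is correct and follows exactly the same route as the paper: substitute the inversion formula (4.3) into $a_n x_n$, identify the resulting lower-triangular matrix $C=(c_{nj})$, reduce the condition $a\in[X(r,s,t;\Delta)]^{\alpha}$ to $C\in(X,l_1)$, and then invoke the Stieglitz--Tietz characterization. The only slip is the label: the result you cite is Theorem~4.4(a) in the paper's numbering, not Theorem~4.1(a).
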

\begin{proof}
Let $a=(a_{n}) \in w$, $x\in X (r, s, t; \Delta)$ and $y\in X$ for $X \in \{ l_{\infty}, c, c_{0}\}$. Then for each $n \in \mathbb{N_{\rm 0}}$, we have
$$  a_{n}x_n = \sum_{j=0}^{n}\sum_{k=0}^{n-j} (-1)^{k} \frac{D_{k}^{(s)}}{t_{k+j}}r_{j}a_{n}y_{j} =(Cy)_{n},$$
where the matrix $C=(c_{nj})$ is defined as
$$
c_{nj} = \left\{
\begin{array}{ll}
 \displaystyle\sum_{k=0}^{n-j} (-1)^k \frac{D_k^{(s)}}{t_{k+j}}r_ja_n& \mbox{if}~~   0\leq j \leq n \\
0 & \mbox{if} ~~ j>n
\end{array}
\right.
$$
and $x_n$ is given by $(4.3)$.
Thus for each $x \in X(r,s,t; \Delta)$, $(a_nx_{n})_{n} \in l_{1}$ if
and only if $(Cy)_{n} \in l_{1}$ where $y \in X $ for $X  \in
\{l_{\infty}, c, c_{0} \}$. Therefore $a=(a_{n}) \in [X(r,s,t;
\Delta)]^{\alpha}$ if and only if $C \in (X, l_1)$. By using Theorem 4.4(a)
, we have
$$ [X(r,s,t; \Delta)]^{\alpha} = \Lambda.$$
\end{proof}

\begin{thm}
The $\gamma$-dual of $X(r, s, t;\Delta)$ for $X\in\{l_\infty, c, c_0\}$ is the set $$\Gamma = \Big\{a=(a_n)\in w: ~~ \displaystyle\sup_{m}\displaystyle\sum_{n=0}^{\infty}|e_{mn}|<\infty\Big\},$$
where the matrix $E=(e_{mn})$ is defined by
$$E=(e_{mn})= \left\{
\begin{array}{ll}
   r_n \displaystyle\bigg[\frac{a_{n}}{s_0t_n} + \Big(\frac{D_{0}^{(s)}}{t_n}- \frac{D_{1}^{(s)}}{t_{n+1}} \Big)\sum_{j=n+1}^{m}a_{j}+
   \sum_{j=n+2}^{m}(-1)^{j-n} \frac{D_{j-n}^{(s)}}{t_{j}}\Big(\sum_{k=j}^{m}a_{k}\Big) \bigg] & \quad 0\leq n \leq m,\\
    0 & \quad n > m.
\end{array}\right. $$
Note: We mean $\sum\limits_{j =n}^{m} =0$ if $n > m$.
\end{thm}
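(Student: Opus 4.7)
The plan is to reduce the $\gamma$-dual computation to a matrix-class problem that can be handled by Theorem 4.4(b). By definition, $a=(a_n)\in [X(r,s,t;\Delta)]^{\gamma}$ means $(a_nx_n)\in bs$ for every $x\in X(r,s,t;\Delta)$, i.e.\ the partial sums $\sigma_m(x):=\sum_{n=0}^{m}a_nx_n$ are bounded in $m$. Since $A(r,s,t;\Delta)$ is a triangle isomorphism from $X(r,s,t;\Delta)$ onto $X$ (Theorem 4.2), I will parametrise $x$ by $y=A(r,s,t;\Delta)x\in X$ using the inversion formula (4.3), and rewrite $\sigma_m(x)$ as $(Ey)_m$ for a suitable matrix $E=(e_{mn})$. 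Then $a\in [X(r,s,t;\Delta)]^{\gamma}$ if and only if $E\in (X,l_\infty)$, which for each $X\in\{l_\infty,c,c_0\}$ is characterised by condition (4.5), i.e.\ $\sup_m\sum_{n=0}^{\infty}|e_{mn}|<\infty$.

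The first concrete step is to substitute (4.3) into $\sigma_m(x)$ and interchange the order of summation. Writing $c_{kn}=\sum_{i=0}^{k-n}(-1)^{i}\frac{D_i^{(s)}}{t_{i+n}}r_n$ for $0\le n\le k$, formula (4.3) gives $x_k=\sum_{n=0}^{k}c_{kn}y_n$, so
\[
\sigma_m(x)=\sum_{k=0}^{m}a_k\sum_{n=0}^{k}c_{kn}y_n=\sum_{n=0}^{m}\Big(\sum_{k=n}^{m}a_kc_{kn}\Big)y_n,
\]
which identifies $e_{mn}=\sum_{k=n}^{m}a_kc_{kn}$ for $n\le m$ and $e_{mn}=0$ otherwise.

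Next, I will bring this raw expression into the explicit form stated in the theorem. After substituting the definition of $c_{kn}$ and swapping the order of the inner sums (letting $j=i+n$ in the index over $D_i^{(s)}$, so $j$ runs from $n$ to $k$, then swapping with $k$), one obtains
\[
e_{mn}=r_n\sum_{j=n}^{m}(-1)^{j-n}\frac{D_{j-n}^{(s)}}{t_j}\sum_{k=j}^{m}a_k.
\]
Splitting off the $j=n$ term, using $D_0^{(s)}=1/s_0$ to peel $\frac{a_n}{s_0t_n}$ from $\frac{1}{s_0t_n}\sum_{k=n}^{m}a_k$, and combining the remainder of the $j=n$ contribution with the $j=n+1$ term, yields exactly the displayed formula for $e_{mn}$ in the statement.

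The conceptual step is recognising that boundedness of $(\sigma_m(x))_m$ for every $x$ is equivalent to $E$ mapping $X$ into $l_\infty$, so Theorem 4.4(b) gives the characterisation in all three cases at once. The main technical obstacle is the bookkeeping in the double-sum manipulation: correctly interchanging the summations, verifying the index ranges (especially the convention $\sum_{j=n}^{m}=0$ when $n>m$, which legitimises the splitting into the three displayed groups), and confirming that no $y_n$-coefficient is dropped or double-counted. Once that identity is established, the equivalence $a\in\Gamma\iff E\in(X,l_\infty)\iff\sigma_m(x)\in l_\infty$ for every $x\in X(r,s,t;\Delta)$ finishes the proof.
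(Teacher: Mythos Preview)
Your proposal is correct and follows essentially the same approach as the paper: substitute the inversion formula (4.3) into the partial sums $\sum_{n=0}^{m}a_nx_n$, interchange the order of summation to rewrite this as $(Ey)_m$, and then invoke Theorem~4.4(b) to characterise $E\in(X,l_\infty)$ by condition (4.5). The only cosmetic difference is that the paper expands the computation term by term (collecting coefficients of $r_0y_0,\,r_1y_1,\ldots$), whereas you carry out the same rearrangement via a cleaner index-swap argument; both routes arrive at the same explicit $e_{mn}$.
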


\begin{proof}
Let $a =(a_n)\in w$, $x \in X(r,s,t; \Delta)$ and $y \in X$, where $X \in \{l_{\infty}, c,
c_{0} \}$. Then by using (4.3), we have
\begin{align*}
\displaystyle\sum_{n=0}^{m}a_n x_n & =\sum_{n=0}^{m}\sum_{j=0}^{n}\sum_{k=0}^{n-j}(-1)^{k} \frac{D_{k}^{(s)}r_{j}y_{j}a_{n}}{t_{k+j}}\\
& = \sum_{n=0}^{m-1}\sum_{j=0}^{n}\sum_{k=0}^{n-j}(-1)^{k} \frac{D_{k}^{(s)}r_{j}y_{j}a_{n}}{t_{k+j}}+ \sum_{j=0}^{m}\sum_{k=0}^{m-j}(-1)^{k} \frac{D_{k}^{(s)}r_{j}y_{j}a_{m}}{t_{k+j}}\\
& =\displaystyle\bigg[ \sum_{n=0}^{m-1}\sum_{k=0}^{n} (-1)^{k} \frac{D_{k}^{(s)}}{t_{k}}a_n + \sum_{k=0}^{m} (-1)^{k} \frac{D_{k}^{(s)}}{t_{k}}a_m \bigg]r_0y_0 \\ & ~~~ +
\displaystyle\bigg[ \sum_{n=0}^{m-1}\sum_{k=0}^{n-1} (-1)^{k} \frac{D_{k}^{(s)}}{t_{k+1}}a_n + \sum_{k=0}^{m-1} (-1)^{k} \frac{D_{k}^{(s)}}{t_{k+1}}a_m \bigg]r_1y_1 + \cdots + \frac{D_{0}^{(s)}}{t_{m}}r_m y_m a_m\\
& =  \bigg[\frac{D_{0}^{(s)}}{t_{0}}a_0 + \Big( \frac{D_{0}^{(s)}}{t_{0}}- \frac{D_{1}^{(s)}}{t_{1}} \Big) \sum_{j=1}^{m}a_j + \sum_{j=2}^{m}(-1)^{j} \frac{D_{j}^{(s)}}{t_{j}} \Big( \sum_{k=j}^{m}a_k \Big) \bigg]r_0 y_0 \\ &  ~~ +
\bigg[\frac{D_{0}^{(s)}}{t_{1}}a_1 + \Big( \frac{D_{0}^{(s)}}{t_{1}}- \frac{D_{1}^{(s)}}{t_{2}} \Big) \sum_{j=2}^{m}a_j + \sum_{j=3}^{m}(-1)^{j-1} \frac{D_{j-1}^{(s)}}{t_{j}} \Big( \sum_{k=j}^{m}a_k \Big)\bigg]r_1 y_1 + \cdots + \frac{r_m a_m}{t_m} D_{0}^{(s)}y_{m}\\
& = \sum_{n=0}^{m}r_n \bigg[\frac{a_{n}}{s_0t_n} + \Big(\frac{D_{0}^{(s)}}{t_n}- \frac{D_{1}^{(s)}}{t_{n+1}} \Big)
\sum_{j=n+1}^{m}a_{j}+ \sum_{j=n+2}^{m}(-1)^{j-n} \frac{D_{j-n}^{(s)}}{t_{j}}\Big(\sum_{k=j}^{m}a_{k}\Big) \bigg]y_{n} \\
&= (Ey)_{m},
\end{align*}
where $E=(e_{mn})$ is the matrix defined above. \\Thus $a \in \big[X(r,s,t;
\Delta)\big]^{\gamma}$ if and only if $ax=(a_nx_n)\in bs$ for
$x\in X(r,s,t; \Delta)$ if and only if
$\Big(\displaystyle\sum_{n=0}^{m}a_n x_n \Big)_{m} \in
l_{\infty}$, i.e. $(Ey)_{m} \in l_{\infty}$, for $y\in X$. Hence by Theorem 4.4(b), we have
 $$ \big[X(r,s,t; \Delta)\big]^{\gamma} = \Gamma.$$
\end{proof}

\subsection{$\beta$-dual and Matrix transformations}
We now first discuss about the $\beta$-dual and then
characterize some matrix transformations. Let $T$ be a triangle and
$X_{T}$ be the matrix domain of $T$.
\begin{thm}\label{1}
{(\cite{JAR}, Theorem 2.6)} Let $X $ be a BK space with AK property and
$R=S^{t}$, the transpose of $S$, where $S=(s_{jk})$ is the inverse of the matrix $T$. Then
$a \in (X_{T})^{\beta} $ if and only if
 $a \in (X^{\beta})_{R}$ and $W \in (X, c_{0})$, where the triangle $W$ is defined by $w_{mk}  = \sum\limits_{j=m}^{\infty}a_{j}s_{jk}$.
 Moreover if $a \in (X_{T})^{\beta}$, then
$$ \sum\limits_{k=0}^{\infty}a_{k}z_{k} = \sum\limits_{k=0}^{\infty} R_{k}(a)T_{k}(z)  \qquad \forall ~z \in X_{T}.$$
\end{thm}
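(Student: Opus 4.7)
The plan is to reduce to finite partial sums and then leverage the AK/BK structure of $X$. For $z \in X_T$, I would set $y = Tz \in X$ so that $z = Sy$ (the triangle $T$ has inverse $S$). Interchanging the order of summation in a finite sum gives
$$\sum_{k=0}^{m} a_k z_k = \sum_{j=0}^{m} y_j \sum_{k=j}^{m} a_k s_{kj},$$
and the inner sum equals $R_j(a) - w_{m+1,j}$ whenever $R_j(a) := \sum_{k=j}^{\infty} a_k s_{kj}$ exists; note $R_j(a) = (Ra)_j$ because $R=S^{t}$. Consequently,
$$\sum_{k=0}^{m} a_k z_k = \sum_{j=0}^{m} R_j(a)\, y_j \;-\; \sum_{j=0}^{m} w_{m+1,j}\, y_j,$$
and this identity will be the engine of both implications.

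For necessity, I would suppose $a \in (X_T)^\beta$. Since $T$ is a linear homeomorphism between the BK spaces $X_T$ and $X$, the functional $f_a(z) := \sum_k a_k z_k$ is continuous on $X_T$, and its pullback $g_a(y) := f_a(Sy)$ is continuous on $X$. Evaluating $g_a$ on the basis element $e^{(j)}$ (so that $Se^{(j)} \in X_T$) would show that $R_j(a)$ exists for every $j$. Expanding $y = \sum_j y_j e^{(j)}$ by AK and invoking continuity of $g_a$ should give $g_a(y) = \sum_j R_j(a)\, y_j$ for every $y \in X$, so $R(a) \in X^\beta$, i.e., $a \in (X^\beta)_R$. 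Both sides of the displayed identity then converge to $g_a(y)$ as $m \to \infty$, which forces $\sum_{j=0}^{m} w_{m+1,j}\, y_j \to 0$. Writing $(Wy)_{m+1} = \sum_{j=0}^{m} w_{m+1,j}\, y_j + R_{m+1}(a)\, y_{m+1}$ and using that $R_j(a)\, y_j \to 0$ as the general term of a convergent series, I would conclude $W \in (X, c_0)$.

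For sufficiency, I would assume $a \in (X^\beta)_R$ and $W \in (X, c_0)$, and take $z \in X_T$ with $y = Tz \in X$. In the identity, the first right-hand sum converges as $m \to \infty$ because $R(a) \in X^\beta$ and $y \in X$; the second equals $(Wy)_{m+1} - R_{m+1}(a)\, y_{m+1}$, and both terms tend to $0$ by the hypothesis on $W$ together with the vanishing-term remark. Hence $\sum_k a_k z_k$ converges, showing $a \in (X_T)^\beta$, and passing to the limit yields the announced formula $\sum_{k} a_k z_k = \sum_{k} R_k(a)\, T_k(z)$.

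The main obstacle I anticipate is the boundary bookkeeping: relating $(Wy)_{m+1}$ to the partial-sum remainder $\sum_{j=0}^{m} w_{m+1,j}\, y_j$ requires carefully isolating the diagonal term $R_{m+1}(a)\, y_{m+1} = w_{m+1,m+1}\, y_{m+1}$, and the AK-plus-continuity transfer of $g_a(y) = \sum_j R_j(a)\, y_j$ from basis expansions to all of $X$ must be justified rigorously so that the partial-sum identity can genuinely be iterated in both directions.
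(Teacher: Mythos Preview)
The paper does not supply its own proof of this theorem: it is quoted verbatim as Theorem~2.6 of \cite{JAR} and used as a black box, so there is nothing in the present paper to compare your argument against.

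That said, your outline is essentially the standard proof of the Jarrah--Malkowsky result and is sound. The finite interchange
\[
\sum_{k=0}^{m} a_k z_k=\sum_{j=0}^{m} y_j\sum_{k=j}^{m} a_k s_{kj}
=\sum_{j=0}^{m} R_j(a)\,y_j-\sum_{j=0}^{m} w_{m+1,j}\,y_j
\]
is correct once $R_j(a)$ is known to exist, and your mechanism for obtaining $R_j(a)$ in the necessity direction---evaluating the continuous functional $g_a=f_a\circ S$ on $e^{(j)}$ and then extending by AK---is exactly the right use of the BK/AK hypotheses. The diagonal bookkeeping $(Wy)_{m+1}=\sum_{j=0}^{m} w_{m+1,j}y_j+R_{m+1}(a)\,y_{m+1}$ is also correct, since $w_{m+1,m+1}=R_{m+1}(a)$ by definition. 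Two small points worth making explicit when you write it up: (i) that $f_a$ is continuous on the BK space $X_T$ follows from the closed graph theorem (continuity of $\beta$-dual functionals on BK spaces), and (ii) in the sufficiency direction you should observe that the very existence of each $w_{m,j}$ is already guaranteed by $R_j(a)$ converging, so that the partial-sum identity is legitimately available before invoking $W\in(X,c_0)$.
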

\begin{rmrk}{(\cite{JAR}, Remark 2.7)}\label{r_1}
The conclusion of the Theorem $\ref{1}$ is also true for $X=l_\infty$.
\end{rmrk}

\begin{rmrk}(\cite{MAL1}, \cite{JAR}) \label{r_2}
We have $a \in (c_{T})^{\beta}$ if and only if $Ra \in l_{1}$ and $W \in (c,c)$.
Moreover, if $a \in (c_{T})^{\beta}$ then we have for all $z \in c_{T}$
$$\sum\limits_{k=0}^{\infty}a_{k}z_{k} = \sum\limits_{k=0}^{\infty}R_{k}(a)T_{k}(z) - \eta \gamma, $$
where $\eta = \displaystyle \lim_{k\rightarrow \infty} T_{k}(z)$ and $\gamma = \displaystyle \lim_{m \rightarrow \infty} \sum\limits_{k=0}^{m}w_{mk}$.
\end{rmrk}

To find $\beta$-duals of the sequence spaces $X(r, s, t; \Delta)$ for $X\in \{l_{\infty}, c, c_0\}$, we list the following sets:
\begin{align*}
 &B_1 = \Big \{ a \in w : \sum\limits_{k=0}^{\infty} |R_{k}(a)| < \infty \Big \}\\
 &B_2 = \Big \{ a \in w : \lim_{m \rightarrow \infty}w_{mk} =0 ~~{\rm for~ all}~ k \Big \}\\
 &B_3 = \Big \{ a \in w : \sup_{m}\sum\limits_{k=0}^{\infty}|w_{mk}| < \infty \Big \}\\
 &B_4 = \Big \{ a \in w :  \lim_{m \rightarrow \infty} \sum\limits_{k=0}^{m}|w_{mk}|=0 \Big \}\\
 &B_5 = \Big \{ a \in w :  \lim_{m \rightarrow \infty}w_{mk} ~~ {\rm~ exists~ for~ all }~ k \Big \}\\
 &B_6 = \Big \{ a \in w :  \lim_{m \rightarrow \infty}\sum\limits_{k=0}^{m}w_{mk} \mbox{~~exists~}\Big \},
\end{align*}
where $ R_{k}(a)
   =   r_{k} \bigg [ \frac{a_{k}}{s_{0}t_{k}} + \Big( \frac{D_{0}^{(s)}}{t_{k}} - \frac{D_{1}^{(s)}}{t_{k+1}} \Big) \sum\limits_{j= k+1}^{\infty} a_{j} +
\sum\limits_{l=2}^{\infty} (-1)^{l} \frac{D_{l}^{(s)}}{t_{l+k}}
\sum\limits_{j=k+l}^{\infty} a_{j}\bigg ] $, $R(a)= (R_{k}(a))_{k}$ and \\
$w_{mk} = r_{k} \bigg [\sum\limits_{l=0}^{m-k}(-1)^{l}
\frac{D_{l}^{(s)}}{t_{l+k}}\sum\limits_{j= m}^{\infty} a_{j} +
\sum\limits_{l=m-k+1}^{\infty}(-1)^{l} \frac{D_{l}^{(s)}}{t_{l+k}}
\sum\limits_{j= k+l}^{\infty} a_{j} \bigg ]$.

\begin{thm}
We have $[c_{0}(r, s, t; \Delta)]^\beta = B_{1} \bigcap B_{2}\bigcap B_{3}$, $[l_{\infty}(r, s, t; \Delta)]^\beta=B_{1} \bigcap B_{4} $ and $[c(r, s, t; \Delta)]^\beta=B_{1} \bigcap B_{3}\bigcap B_{5} \bigcap B_{6}$.
\end{thm}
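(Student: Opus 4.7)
The plan is to present $X(r,s,t;\Delta)$ as the matrix domain $X_{T}$ of the triangle $T = A(r,s,t;\Delta) = A(r,s,t)\cdot\Delta$, and then read the three $\beta$-duals off Theorem \ref{1}, Remark \ref{r_1} and Remark \ref{r_2} combined with the appropriate parts of Theorem 4.4. The key preparatory step is to compute the entries of $S = T^{-1}$. Since $T^{-1} = \Delta^{-1}\cdot A(r,s,t)^{-1}$ and the inverse of $A(r,s,t)$ is already recorded in Section 3, formula (4.3) immediately gives, for $0\le k\le n$,
$$ s_{nk} = r_{k}\sum_{l=0}^{n-k}(-1)^{l}\frac{D_{l}^{(s)}}{t_{l+k}},$$
and $s_{nk}=0$ otherwise.

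Next I would check that the quantities $R_{k}(a)$ and $w_{mk}$ used in defining $B_{1},\dots,B_{6}$ really are the column sums of $S$ weighted by $a$. By definition $R_{k}(a) = (S^{t}a)_{k} = \sum_{j\ge k} s_{jk}a_{j}$ and $w_{mk} = \sum_{j\ge m}a_{j}s_{jk}$ for $0\le k\le m$. Substituting the expression for $s_{jk}$ and interchanging the summation orders produces
\begin{align*}
R_{k}(a) &= r_{k}\sum_{l=0}^{\infty}(-1)^{l}\frac{D_{l}^{(s)}}{t_{l+k}}\sum_{j=k+l}^{\infty}a_{j},\\
w_{mk} &= r_{k}\Big[\sum_{l=0}^{m-k}(-1)^{l}\frac{D_{l}^{(s)}}{t_{l+k}}\sum_{j=m}^{\infty}a_{j} + \sum_{l=m-k+1}^{\infty}(-1)^{l}\frac{D_{l}^{(s)}}{t_{l+k}}\sum_{j=k+l}^{\infty}a_{j}\Big].
\end{align*}
Splitting off the $l=0$ and $l=1$ terms of the first formula (and using $D_{0}^{(s)}=1/s_{0}$) recovers the exact expression for $R_{k}(a)$ given just before the statement of the theorem; the second formula is already in the stated form.

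With these identifications in hand the theorem follows by selecting the correct characterization for each $X$. For $X=c_{0}$ (a BK space with AK), Theorem \ref{1} yields $a\in[c_{0}(r,s,t;\Delta)]^{\beta}$ iff $Ra\in c_{0}^{\beta}=l_{1}$ and $W\in(c_{0},c_{0})$; Theorem 4.4(c) rewrites the latter as conditions (4.5) and (4.7), so the three conditions become $B_{1}\cap B_{3}\cap B_{2}$. For $X=l_{\infty}$, Remark \ref{r_1} extends Theorem \ref{1}: I need $Ra\in l_{\infty}^{\beta}=l_{1}$ together with $W\in(l_{\infty},c_{0})$, and by Theorem 4.4(d) this last condition is exactly (4.6), i.e.\ $B_{4}$. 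For $X=c$, Remark \ref{r_2} gives $Ra\in l_{1}$ together with $W\in(c,c)$, and Theorem 4.4(h) translates $(c,c)$ into the simultaneous conditions (4.5), (4.9) and (4.11), which read as $B_{3}\cap B_{5}\cap B_{6}$.

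The main obstacle is the interchange of summation orders used to derive the closed forms of $R_{k}(a)$ and $w_{mk}$ above; it is what forces the somewhat intricate two-piece definition of $w_{mk}$. The interchanges are legitimate once $a$ is assumed to lie in the prospective $\beta$-dual, because then $\sum a_{n}x_{n}$ converges for every element of the space (including the basis vectors $b^{(j)}$ of Theorem 4.3), and hence the tail series $\sum_{j\ge N}a_{j}$ make sense. A small bookkeeping point, already flagged in the paper's convention $\sum_{j=n}^{m}=0$ for $n>m$, must also be respected so that the boundary terms ($k=m$ in $w_{mk}$ and the transition to $R_{k}(a)$ as $m\to\infty$) are handled uniformly.
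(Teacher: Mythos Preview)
Your proposal is correct and follows essentially the same route as the paper: compute the inverse $S$ of the triangle $T=A(r,s,t)\cdot\Delta$, identify $R_{k}(a)$ and $w_{mk}$ as the indicated sums over the columns of $S$, and then invoke Theorem~\ref{1} and Remarks~\ref{r_1}, \ref{r_2} together with the relevant parts of Theorem~4.4. Your write-up is in fact slightly more explicit than the paper's, since you spell out which clauses of Theorem~4.4 correspond to each $B_{i}$ and comment on the summation interchange, but the underlying argument is the same.
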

\begin{proof}
Here the matrix $T=A(r, s, t).\Delta=(t'_{nk})$, where
\begin{displaymath}
t'_{nk} = \left\{
\begin{array}{ll}
\frac{1}{r_n}[s_{n-k}t_k-s_{n-k+1}t_{k+1}] & \mbox{if}~~   0\leq k<n \\
\frac{s_0t_n}{r_n} &\mbox{if}~~ k=n\\
0 & \mbox{if} ~~ k>n.
\end{array}
\right.
\end{displaymath} So,  $T^{-1}= {(A(r, s, t).\Delta)}^{-1}=\Delta^{-1}.{A(r, s, t)}^{-1}$. Let $S=(s_{jk})$ be the inverse of $T$. Then we easily get
\begin{displaymath}
  s_{jk}  = \left\{
     \begin{array}{ll}
        {  \sum\limits_{l=0}^{j-k}(-1)^{l} \frac{D_{l}^{(s)}}{t_{l+k}} r_{k}} &  \mbox{if}~~ \quad 0\leq k \leq j\\
        0              & \mbox{if}~~ \quad  k > j.
     \end{array}
   \right.
\end{displaymath}

To compute $\beta$-duals, we first determine $W=(w_{mk})$ and $R(a)=(R_{k}(a))$, where $R=S^t$.
\begin{align*}
   R_{k}(a)  & = \sum\limits_{j =k}^{\infty} a_{j}s_{jk}\\
  & = \frac{D_{0}^{(s)}}{t_{k}} r_{k}a_k  +  \sum\limits_{j =k+1}^{\infty} \sum\limits_{l=0}^{j-k}(-1)^{l} \frac{D_{l}^{(s)}}{t_{l+k}} r_{k}a_{j}\\
  & = \frac{D_{0}^{(s)}}{t_{k}} r_{k}a_k  +  \sum\limits_{l=0}^{1}(-1)^{l} \frac{D_{l}^{(s)}}{t_{l+k}} r_{k}a_{k+1} + \sum\limits_{l=0}^{2}(-1)^{l} \frac{D_{l}^{(s)}}{t_{l+k}} r_{k}a_{k+2} + \cdots \\
  & =   r_{k} \bigg [ \frac{a_{k}}{s_{0}t_{k}} + \Big( \frac{D_{0}^{(s)}}{t_{k}} - \frac{D_{1}^{(s)}}{t_{k+1}} \Big) \sum\limits_{j= k+1}^{\infty} a_{j} +
\sum\limits_{l=2}^{\infty} (-1)^{l} \frac{D_{l}^{(s)}}{t_{l+k}}
\sum\limits_{j=k+l}^{\infty} a_{j} \bigg ]
\end{align*}and
\begin{align*}
w_{mk} & = \sum\limits_{j=m}^{\infty}a_{j}s_{jk} \\
& = \sum\limits_{j=m}^{\infty}\sum\limits_{l=0}^{j-k}(-1)^{l} \frac{D_{l}^{(s)}}{t_{l+k}} r_{k}a_j\\
& = r_{k} \bigg [\sum\limits_{l=0}^{m-k}(-1)^{l}
\frac{D_{l}^{(s)}}{t_{l+k}}\sum\limits_{j= m}^{\infty} a_{j} +
\sum\limits_{l=m-k+1}^{\infty}(-1)^{l} \frac{D_{l}^{(s)}}{t_{l+k}}
\sum\limits_{j= k+l}^{\infty} a_{j} \bigg ].
\end{align*}
Using Theorem $\ref{1}$ and Remark \ref{r_1} \& \ref{r_2}, we have
$[c_{0}(r, s, t; \Delta)]^\beta = B_{1} \bigcap B_{2}\bigcap B_{3}$, $[l_{\infty}(r, s, t; \Delta)]^\beta=B_{1} \bigcap B_{4} $ and $[c(r, s, t; \Delta)]^\beta=B_{1} \bigcap B_{3}\bigcap B_{5} \bigcap B_{6}$.
\end{proof}

\begin{thm} {(\cite{JAR}, Theorem 2.13)}{\label{2}}
Let $X $ be a BK space with AK property, $Y$ be an arbitrary subset of $w$ and $R = S^{t}$. Then $A \in (X_{T}, Y)$ if and only if $B^{A} \in (X, Y)$ and $W^{A_{n}} \in (X, c_{0})$ for all $n =0, 1,2, \cdots$, where $B^A$ is the matrix with rows $B_n^A=R(A_n)$, $A_n$ are the rows of $A$ and the triangles $W^{A_n}$ are defined by
\begin{displaymath}
  w^{A_{n}}_{mk}  = \left\{
     \begin{array}{ll}
        {  \sum\limits_{j=m}^{\infty}a_{nj}s_{jk} }& : \quad 0\leq k \leq m \\
        0              & :  \quad  k > m.
     \end{array}
   \right.
\end{displaymath}
\end{thm}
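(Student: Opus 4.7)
The strategy is to reduce the two-sided statement about matrix mappings into $Y$ to two separate facts: a row-by-row $\beta$-dual characterization via Theorem \ref{1}, together with the triangle change of variables $y = Tx$ that bijects $X_T$ with $X$. The whole argument is essentially bookkeeping around these two ingredients.

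First, I would unpack $A \in (X_T, Y)$ into the two conditions that (a) every row $A_n$ lies in $(X_T)^\beta$, so that $(Ax)_n = \sum_k a_{nk} x_k$ is defined for each $x \in X_T$, and (b) the resulting sequence $Ax$ lies in $Y$. Applying Theorem \ref{1} (with Remark \ref{r_1} for the case $X=l_\infty$) to $a = A_n$ shows that (a) holds for every $n$ if and only if $R(A_n) = B^A_n \in X^\beta$ for every $n$ and $W^{A_n} \in (X, c_0)$ for every $n$. Note that the requirement $B^A_n \in X^\beta$ for every $n$ is automatic whenever $B^A \in (X,Y)$ (rows of any matrix mapping from $X$ must lie in $X^\beta$), so the substantive row-level output of Theorem \ref{1} is exactly the condition $W^{A_n} \in (X, c_0)$ for all $n$.

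Second, assuming (a), the moreover clause of Theorem \ref{1} gives, for each $n$ and each $x \in X_T$,
\[
(Ax)_n \;=\; \sum_{k=0}^{\infty} a_{nk} x_k \;=\; \sum_{k=0}^{\infty} R_k(A_n)\, T_k(x) \;=\; (B^A y)_n,
\]
where $y := Tx$. Because $T$ is a triangle, $T\colon X_T \to X$ is a bijection with inverse given by $S$; hence as $x$ runs over $X_T$, $y = Tx$ runs over all of $X$. Consequently $Ax \in Y$ for every $x \in X_T$ is equivalent to $B^A y \in Y$ for every $y \in X$, i.e.\ to $B^A \in (X, Y)$. Combining this with the first step yields both implications.

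The main obstacle I anticipate is bookkeeping uniformity in $n$: one has to ensure that Theorem \ref{1} is being applied simultaneously to every row $A_n$ and that the pointwise identity $(Ax)_n = (B^A(Tx))_n$ comes for free from that theorem, so that no independent justification of an interchange of summations is needed. A minor side issue would arise for $X = c$, where Remark \ref{r_2} introduces a correction term $-\eta\gamma$ that must be tracked; for the AK cases $X = c_0$ and (via Remark \ref{r_1}) $X = l_\infty$ that the theorem is stated for, the argument runs exactly as above.
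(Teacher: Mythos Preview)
The paper does not supply its own proof of this theorem; it is simply quoted as Theorem~2.13 of \cite{JAR} and used as a black box. So there is no in-paper argument to compare against.

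That said, your proposal is correct and is essentially the standard proof one finds in the source: apply the $\beta$-dual characterization of Theorem~\ref{1} to each row $A_n$, use the ``moreover'' identity $\sum_k a_{nk}x_k = \sum_k R_k(A_n)T_k(x)$ to rewrite $(Ax)_n$ as $(B^A(Tx))_n$, and then exploit the bijection $T\colon X_T \to X$ induced by the triangle to transfer the condition $Ax \in Y$ for all $x \in X_T$ into $B^A y \in Y$ for all $y \in X$. Your observation that $B^A \in (X,Y)$ already forces $B^A_n \in X^\beta$ for every $n$ is exactly what makes the two sets of conditions match up cleanly, and your remark that the $X=c$ case requires the extra correction term from Remark~\ref{r_2} (which is why the paper states a separate theorem for $c_T$) is also on point.
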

\begin{thm} {(\cite{JAR})}
Let $Y$ be any linear subspace of $w$. Then $A \in (c_{T}, Y)$ if and only if $R_k(A_{n}) \in (c_{0}, Y)$ and $W^{A_{n}} \in (c, c)$ for all $n$ and
$R_k(A_{n})e - (\gamma_{n}) \in Y$, where $\gamma_{n} = \displaystyle \lim_{m \rightarrow \infty} \sum\limits_{k=0}^{m}w^{A_n}_{mk}$ for $n=0,1,2\cdots$ and $e=(1, 1, 1, \cdots)$.\\
Moreover, if $A \in (c_{T}, Y )$ then we have
$$ Az = R_k(A_{n})(T(z)) - \eta (\gamma_{n}) \quad  {\rm for~ all }~ z \in c_T, ~{\rm where}~ \eta = \displaystyle \lim_{k \rightarrow \infty}T_{k}(z).$$
\end{thm}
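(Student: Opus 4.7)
The plan is to extend Theorem \ref{2} from the case of a BK space with AK to the case $X = c$, which lacks AK but admits the decomposition $c = \mathbb{C} e \oplus c_0$. The proof will reduce to the $\beta$-dual formula of Remark \ref{r_2} applied row by row, followed by a splitting of the resulting identity. Throughout, write $B^{A}$ for the infinite matrix whose $n$-th row is $R(A_n)$, i.e.\ $(B^{A})_{nk}= R_k(A_n)$.

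First I would argue that, for $Az$ to be even defined as a sequence, each row $A_n=(a_{nk})_k$ must lie in $(c_T)^\beta$. By Remark \ref{r_2}, this is equivalent to $R(A_n)\in l_1$ together with $W^{A_n}\in (c,c)$, and the same remark supplies the evaluation formula
\[
\sum_{k=0}^{\infty} a_{nk}\, z_k \;=\; \sum_{k=0}^{\infty} R_k(A_n)\, T_k(z) \;-\; \eta\, \gamma_n,
\]
where $\eta=\lim_{k\to\infty} T_k(z)$ and $\gamma_n=\lim_{m\to\infty}\sum_{k=0}^{m}w^{A_n}_{mk}$. In compact form, $(Az)_n=\bigl(B^{A}\,T(z)\bigr)_n-\eta\,\gamma_n$. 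This simultaneously yields the ``moreover'' formula of the theorem and forces $W^{A_n}\in(c,c)$ for all $n$ as a necessary condition.

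Next, since $T$ is a triangle, the map $T\colon c_T\to c$ is a bijection, so the condition $A\in(c_T,Y)$ becomes: for every $u\in c$, the sequence $B^{A}u-(\lim u)(\gamma_n)$ lies in $Y$. Decomposing $u=\eta e+u_0$ with $u_0\in c_0$ gives
\[
B^{A}u-\eta(\gamma_n) \;=\; B^{A}u_0 \;+\; \eta\bigl(B^{A}e-(\gamma_n)\bigr).
\]
Because $Y$ is a linear subspace and $\eta$, $u_0$ can be varied independently, membership in $Y$ for all $u\in c$ is equivalent to the pair of conditions $B^{A}u_0\in Y$ for every $u_0\in c_0$ (i.e.\ $B^{A}\in (c_0,Y)$, which the paper writes ``$R_k(A_n)\in(c_0,Y)$'') together with $B^{A}e-(\gamma_n)\in Y$ (the paper's ``$R_k(A_n)\,e-(\gamma_n)\in Y$''). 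The converse direction is then immediate: under these three assumptions, the identity in the first step produces a well-defined $(Az)_n$ for every $z\in c_T$, and the splitting just performed shows $Az\in Y$.

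The main obstacle is exactly the splitting step: one must be careful to extract two \emph{separate} membership conditions in $Y$ from the single identity, which is precisely why the hypothesis that $Y$ be a linear subspace is needed and why the extra summability condition $B^{A}e-(\gamma_n)\in Y$ cannot be absorbed into $B^{A}\in(c_0,Y)$. Everything else is bookkeeping: checking that the row-wise application of Remark \ref{r_2} is legitimate (granted by $R(A_n)\in l_1$, which is subsumed in $B^{A}\in (c_0,Y)$), and that $\gamma_n$ is well-defined for every $n$ (granted by $W^{A_n}\in(c,c)$).
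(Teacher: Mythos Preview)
The paper does not supply a proof of this theorem; it is quoted from \cite{JAR} and stated without argument. Your proposal is correct and is in fact the standard route: apply Remark \ref{r_2} row by row to obtain both the necessity of $W^{A_n}\in(c,c)$ and the evaluation formula $(Az)_n=(B^{A}T(z))_n-\eta\gamma_n$, then use the bijection $T\colon c_T\to c$ and the splitting $c=\mathbb{C}e\oplus c_0$ together with the linearity of $Y$ to separate the resulting condition into $B^{A}\in(c_0,Y)$ and $B^{A}e-(\gamma_n)\in Y$. Your observation that $R(A_n)\in l_1$ is subsumed in $B^{A}\in(c_0,Y)$ (since $(c_0)^\beta=l_1$) closes the loop for the converse direction.
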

 To characterize the matrix transformations $A\in (X(r, s,t; \Delta), Y)$ for $X, Y\in \{l_{\infty}, c, c_0\}$, we consider the following conditions:\\
 \begin{align}
 &\displaystyle\sup_{n}\sum\limits_{k=0}^{\infty}|R_{k}(A_{n})| < \infty\\
 &\displaystyle\lim_{n \rightarrow \infty } R_{k}(A_{n}) =0 \quad  \mbox{~for all~} k
 \end{align}
 \begin{align}
&\displaystyle\sup_{m} \sum\limits_{k=0}^{m}|w_{mk}^{A_{n}}| < \infty \quad \mbox{~for all~} n \\
&\displaystyle\lim_{m \rightarrow \infty}w_{mk}^{A_{n}} =0 \mbox{~for all~} n, k \\
&\displaystyle\lim_{n \rightarrow \infty } R_{k}(A_{n})  \mbox{~exists for all~} k\\
&\displaystyle\lim_{n \rightarrow \infty}\sum\limits_{k=0}^{\infty}|R_{k}(A_{n})| = 0\\
&\displaystyle\lim_{m \rightarrow \infty} \sum\limits_{k=0}^{m}|w_{mk}^{A_{n}}| =0  \quad \mbox{~for all~} n \\
&\displaystyle\lim_{n \rightarrow \infty } \sum_{k=0}^{\infty}\Big|R_{k}(A_{n})- \lim_{n \rightarrow \infty }R_{k}(A_{n})\Big| =0\\
&\displaystyle\lim_{m \rightarrow \infty}w_{mk}^{A_{n}} \mbox{~exists for all~} k, ~n \\
&\displaystyle\lim_{m \rightarrow \infty} \sum\limits_{k=0}^{m} w_{mk}^{A_{n}}\mbox{~exists~}\mbox{~for all~} n\\
  &R_{k}(A_{n})e - (\gamma_{n}) \in c_{0}  \quad  \mbox{~for all~}  \gamma_{n}, ~n=0, 1, 2, \cdots\\
 &R_{k}(A_{n})e - (\gamma_{n}) \in l_{\infty} \quad  \mbox{~for all~} \gamma_{n},~ n=0, 1, 2, \cdots\\
 &R_{k}(A_{n})e - (\gamma_{n}) \in c  \quad  \mbox{~for all~}  \gamma_{n},~ n=0, 1, 2, \cdots,
 \end{align}
 where $\gamma_{n}= \displaystyle\lim_{m \rightarrow \infty} \sum\limits_{k=0}^{m} w_{mk}^{A_{n}}$,\\
$R_{k}(A_{n}) = r_{k} \Big [ \frac{a_{nk}}{s_{0}t_{k}} + \Big( \frac{D_{0}^{(s)}}{t_{k}} - \frac{D_{1}^{(s)}}{t_{k+1}} \Big) \sum\limits_{j= k+1}^{\infty} a_{nj} +
\sum\limits_{l=2}^{\infty} (-1)^{l} \frac{D_{l}^{(s)}}{t_{l+k}} \sum\limits_{j=k+l}^{\infty} a_{nj} \Big ]$
and\\
$w^{A_{n}}_{mk} = r_{k} \bigg [\sum\limits_{l=0}^{m-k}(-1)^{l}
\frac{D_{l}^{(s)}}{t_{l+k}}\sum\limits_{j= m}^{\infty} a_{nj} +
\sum\limits_{l=m-k+1}^{\infty}(-1)^{l} \frac{D_{l}^{(s)}}{t_{l+k}}
\sum\limits_{j= k+l}^{\infty} a_{nj} \bigg ].$

\begin{thm}
(a)  $A \in (c_{0}(r, s,t; \Delta), c_{0})$ if and only if the conditions $(4.12), (4.13),(4.14)$ and $(4.15)$ hold.\\
(b) $A \in (c_{0}(r, s,t; \Delta), c)$ if and only if the conditions $(4.12), (4.14), (4.15)$ and $(4.16)$ hold.\\
(c) $A \in (c_{0}(r, s,t; \Delta), l_{\infty})$ if and only if the conditions $(4.12), (4.14)$ and $(4.15)$ hold.
\end{thm}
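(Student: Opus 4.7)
The plan is to reduce the theorem to an application of Theorem \ref{2} combined with the Stieglitz--Tietz characterizations in Theorem 4.4. Recall that $c_{0}(r,s,t;\Delta) = (c_{0})_{T}$ with $T = A(r,s,t)\cdot\Delta$, and that $c_{0}$ is a BK space with the AK property. Consequently, Theorem \ref{2} applies directly and yields, for every subset $Y$ of $w$, the criterion
\[
A \in (c_{0}(r,s,t;\Delta), Y) \iff B^{A} \in (c_{0}, Y) \text{ and } W^{A_{n}} \in (c_{0}, c_{0}) \text{ for every } n,
\]
where the rows of $B^{A}$ are $R(A_{n})$ and $W^{A_{n}}$ is the triangle defined in the preamble to that theorem. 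The explicit formulas for the entries $R_{k}(A_{n})$ and $w^{A_{n}}_{mk}$ are already at hand: they are obtained by substituting the sequence $A_{n}=(a_{nk})_{k}$ into the expressions for $R_{k}(a)$ and $w_{mk}$ derived in the proof of the $\beta$-dual theorem. So the bulk of the work has been done, and only the translation of the abstract criterion into the stated conditions remains.

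First I would dispose of the condition $W^{A_{n}} \in (c_{0}, c_{0})$, which appears in all three parts. By Theorem 4.4(c) this is equivalent to requiring, for each $n$, that $\sup_{m}\sum_{k}|w^{A_{n}}_{mk}| < \infty$ and $\lim_{m}w^{A_{n}}_{mk} = 0$ for every $k$. These are precisely conditions $(4.14)$ and $(4.15)$, so they must appear in every part of the theorem and we may factor them out once and for all.

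Next I would handle $B^{A} \in (c_{0}, Y)$ case by case using Theorem 4.4. For part (a), $Y = c_{0}$, Theorem 4.4(c) gives the two conditions $\sup_{n}\sum_{k}|R_{k}(A_{n})| < \infty$ and $\lim_{n}R_{k}(A_{n}) = 0$ for every $k$, namely $(4.12)$ and $(4.13)$. For part (b), $Y = c$, Theorem 4.4(f) gives $(4.12)$ together with $\lim_{n}R_{k}(A_{n})$ existing for each $k$, i.e.\ $(4.16)$. For part (c), $Y = l_{\infty}$, Theorem 4.4(b) produces only $(4.12)$. Combining each of these with $(4.14)$ and $(4.15)$ yields the three claimed characterizations.

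I do not expect any genuine obstacle. The only place one must be careful is in verifying that the matrices $B^{A}$ and $W^{A_{n}}$ arising from applying the abstract theorem to the triangle $T = A(r,s,t)\cdot\Delta$ produce exactly the explicit formulas for $R_{k}(A_{n})$ and $w^{A_{n}}_{mk}$ listed with the conditions; but this is precisely the computation that was executed in the proof of the $\beta$-dual theorem, so it can be quoted rather than redone. The proof therefore reduces to a short paragraph invoking Theorem \ref{2} and the relevant items of Theorem 4.4.
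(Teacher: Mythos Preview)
Your proposal is correct and follows essentially the same approach as the paper: apply Theorem \ref{2} with $X = c_{0}$, quote the explicit formulas for $R_{k}(A_{n})$ and $w^{A_{n}}_{mk}$ from the $\beta$-dual computation, and then translate the abstract conditions $B^{A}\in(c_{0},Y)$ and $W^{A_{n}}\in(c_{0},c_{0})$ into $(4.12)$--$(4.16)$ via the relevant items of Theorem 4.4. The paper only spells out part (a) and leaves (b) and (c) to the reader, but your case-by-case treatment of all three parts is exactly what the paper intends.
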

\begin{proof}We prove only the part (a) of this theorem. The other parts follow in a similar way. For this, we first compute the matrices $B^A= R_{k}(A_{n})$ and $W^{A_{n}} =(w_{mk}^{A_{n}})$ for $n =0, 1, 2, \cdots$ of Theorem \ref{2} to determine the conditions $B^A\in (c_0, c_0)$ and $W^{A_{n}}\in (c_0, c_0)$.
Using the same lines of proof as used in Theorem 4.8, we have
\begin{align*}
R_{k}(A_{n}) &= \sum\limits_{j=k}^{\infty}s_{jk}a_{nj}\\
& = \sum\limits_{j=k+1}^{\infty} \sum\limits_{l=0}^{j-k}(-1)^{l} \frac{D_{l}^{(s)}}{t_{l+k}} r_{k}a_{nj} + \frac{D_{0}^{(s)}}{t_{k}} r_{k}a_{nk}\\
& = r_{k} \Big [ \frac{a_{nk}}{s_{0}t_{k}} + \Big( \frac{D_{0}^{(s)}}{t_{k}} - \frac{D_{1}^{(s)}}{t_{k+1}} \Big) \sum\limits_{j= k+1}^{\infty} a_{nj} +
\sum\limits_{l=2}^{\infty} (-1)^{l} \frac{D_{l}^{(s)}}{t_{l+k}} \sum\limits_{j=k+l}^{\infty} a_{nj} \Big ]
\end{align*}
and
\begin{align*}
w^{A_{n}}_{mk} & = \sum\limits_{j=m}^{\infty}a_{nj}s_{jk} \\
& = r_{k} \bigg [\sum\limits_{l=0}^{m-k}(-1)^{l}
\frac{D_{l}^{(s)}}{t_{l+k}}\sum\limits_{j= m}^{\infty} a_{nj} +
\sum\limits_{l=m-k+1}^{\infty}(-1)^{l} \frac{D_{l}^{(s)}}{t_{l+k}}
\sum\limits_{j= k+l}^{\infty} a_{nj} \bigg ].
\end{align*}
Using Theorem \ref{2}, we have $A\in (c_{0}(r, s,t; \Delta), c_{0})$ if and only if the conditions $(4.12), (4.13), (4.14)$ and $(4.15)$ hold.
\end{proof}

\par We can also obtain the following results.
\begin{cor}
(a)  $A \in (l_{\infty}(r, s,t; \Delta), c_{0})$ if and only if the conditions $(4.17)$ and $(4.18)$ hold.\\
(b) $A \in (l_{\infty}(r, s,t; \Delta), c)$ if and only if the conditions $(4.12), (4.16),(4.18)$ and $(4.19)$ hold.\\
(c) $A \in (l_{\infty}(r, s,t; \Delta), l_{\infty})$ if and only if the conditions $(4.12)$ and $(4.18)$ hold.
\end{cor}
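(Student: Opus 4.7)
The plan is to mirror the proof of the preceding theorem, simply replacing the domain space $c_{0}$ by $l_{\infty}$ throughout. Since $T = A(r,s,t)\cdot\Delta$ is a triangle and $l_{\infty}(r,s,t;\Delta) = (l_{\infty})_{T}$, the key tool is Theorem \ref{2}, whose conclusion is extended to $X=l_{\infty}$ by Remark \ref{r_1} (even though $l_{\infty}$ lacks the AK property). This reduces the problem to the equivalent statement that $A \in (l_{\infty}(r,s,t;\Delta), Y)$ if and only if $B^{A} \in (l_{\infty}, Y)$ and $W^{A_{n}} \in (l_{\infty}, c_{0})$ for every $n$, where $B^{A}$ has rows $R(A_{n})$ and $W^{A_{n}}=(w^{A_{n}}_{mk})$ is the triangle defined in Theorem \ref{2}. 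The explicit entries $R_{k}(A_{n})$ and $w^{A_{n}}_{mk}$ were already computed in the proof of the preceding theorem, and they are exactly the quantities appearing in conditions $(4.12)$--$(4.19)$.

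Next, I would translate the two membership conditions using Theorem 4.4 (Stieglitz--Tietz). Since each $W^{A_{n}}$ is a triangle, applying Theorem 4.4(d) to $W^{A_{n}}$ gives $W^{A_{n}} \in (l_{\infty}, c_{0})$ if and only if $\lim_{m}\sum_{k=0}^{m}|w^{A_{n}}_{mk}|=0$, i.e.\ condition $(4.18)$. This single condition provides the $W^{A_{n}}$-part in all three cases (a), (b) and (c).

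For the $B^{A}$-part I split by the choice of $Y$. In case (c), $Y=l_{\infty}$, so Theorem 4.4(b) yields $\sup_{n}\sum_{k}|R_{k}(A_{n})|<\infty$, i.e.\ $(4.12)$. In case (a), $Y=c_{0}$, so Theorem 4.4(d) yields $\lim_{n}\sum_{k}|R_{k}(A_{n})|=0$, i.e.\ $(4.17)$. In case (b), $Y=c$, so Theorem 4.4(g) yields the three conditions $(4.12)$, $(4.16)$ and $(4.19)$ on $B^{A}$. Combining each $B^{A}$-part with $(4.18)$ produces exactly the asserted equivalences.

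The proof is largely bookkeeping, so the main obstacle is notational rather than conceptual: the $B^{A}$ and $W^{A_{n}}$ produced by Theorem \ref{2} must be matched precisely to the explicit $R_{k}(A_{n})$ and $w^{A_{n}}_{mk}$ formulas from the preceding theorem, and Remark \ref{r_1} must be invoked to justify applying Theorem \ref{2} when the domain is $l_{\infty}$ (which is not an AK-space). No convergence estimates beyond those already used in the preceding theorem are needed.
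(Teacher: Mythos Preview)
Your proposal is correct and mirrors the paper's implicit argument: the corollary is stated without proof, following the line ``We can also obtain the following results,'' so the intended justification is precisely the one you give---apply Theorem~\ref{2} (extended to $X=l_\infty$) together with the explicit $R_k(A_n)$ and $w^{A_n}_{mk}$ already computed, and read off the Stieglitz--Tietz conditions from Theorem~4.4. One small point of precision: Remark~\ref{r_1} literally extends Theorem~\ref{1} (the $\beta$-dual result), not Theorem~\ref{2}, but since Theorem~\ref{2} is obtained by applying Theorem~\ref{1} rowwise, the extension to $l_\infty$ carries over and your argument stands.
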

\begin{cor}
(a)  $A \in (c(r, s,t; \Delta), c_{0})$ if and only if the conditions $(4.12), (4.13), (4.14), (4.20), (4.21)$ and $(4.22)$ hold.\\
(b) $A \in (c(r, s,t; \Delta), c)$ if and only if the conditions  $(4.12), (4.14), (4.16), (4.20), (4.21)$ and $(4.24)$ hold.\\
(c) $A \in (c(r, s,t; \Delta), l_{\infty})$ if and only if the conditions  $(4.12), (4.14), (4.20), (4.21)$ and $(4.23)$ hold.
\end{cor}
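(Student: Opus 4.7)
The proof plan is to apply the $c_T$-version of the characterization theorem stated just before the corollary, namely the result $A\in (c_T, Y)$ iff the matrix $B^A=(R_k(A_n))$ is in $(c_0,Y)$, the triangles $W^{A_n}$ are in $(c,c)$ for every $n$, and the tail sequence $R_k(A_n)e-(\gamma_n)$ lies in $Y$. I set $T=A(r,s,t).\Delta$ and take over verbatim the formulas for $R_k(A_n)$ and $w^{A_n}_{mk}$ that were already produced inside the proof of the $c_0$ case (and in Theorem 4.8 for the $\beta$-duals); no new computation of these entries is needed. The three parts (a), (b), (c) then differ only in the target space $Y\in\{c_0,c,l_\infty\}$, so the work reduces to reading off the right sets of conditions from the Stieglitz--Tietz table in Theorem 4.4.

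For part (a), $Y=c_0$. By Theorem 4.4(c), $B^A\in (c_0,c_0)$ translates exactly to $\sup_n\sum_k|R_k(A_n)|<\infty$ and $\lim_n R_k(A_n)=0$ for each $k$, which are (4.12) and (4.13). By Theorem 4.4(h) applied to $W^{A_n}$ for each fixed $n$, the membership $W^{A_n}\in (c,c)$ is equivalent to (4.14), (4.20), (4.21). Finally, the tail condition $R_k(A_n)e-(\gamma_n)\in c_0$ is precisely (4.22). Combining, we get the six conditions listed.

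For (b), with $Y=c$, I replace Theorem 4.4(c) by Theorem 4.4(f), which yields (4.12) and (4.16) for $B^A\in (c_0,c)$; the $W^{A_n}$ conditions stay as (4.14), (4.20), (4.21); and the tail condition becomes (4.24). For (c), with $Y=l_\infty$, Theorem 4.4(b) gives only (4.12) for $B^A\in (c_0,l_\infty)$; the $W^{A_n}$ conditions are still (4.14), (4.20), (4.21); and the tail condition is (4.23). In all three cases, the resulting list matches the statement of the corollary, so the proof is just the appropriate bookkeeping.

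The only genuine point of care (and the step I expect to double-check carefully) is the cross-reference between Theorem 4.4 and the labels (4.12)--(4.24): the definitions of (4.12), (4.13), (4.16) are phrased directly in terms of $R_k(A_n)$, whereas (4.14), (4.20), (4.21) must be recognized as (4.5), (4.9), (4.11) applied to the triangle $W^{A_n}$ for each $n$. Once this translation is fixed, nothing substantive remains to prove: the result follows by a direct application of the theorem cited, using the already-computed formulas for $R_k(A_n)$ and $w^{A_n}_{mk}$.
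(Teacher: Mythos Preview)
Your proposal is correct and follows exactly the approach the paper intends: the corollary is stated without proof, as an immediate consequence of the $c_T$--characterization theorem (Theorem~4.10) together with the formulas for $R_k(A_n)$ and $w^{A_n}_{mk}$ already computed in Theorems~4.8 and~4.11, and the Stieglitz--Tietz table of Theorem~4.4. Your bookkeeping of which numbered conditions correspond to $B^A\in(c_0,Y)$, to $W^{A_n}\in(c,c)$, and to the tail condition is accurate in all three parts.
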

\section{Some applications}
In this section, we justify our results in some special cases. Also, we illustrate the results related to matrix transformations given by Djolovic\cite{DJO}, Polat and Altay \cite{POL1}, Aydin and Ba\c{s}ar \cite{AYD} etc. \\
 $(I)$~ In particular, if we choose $r_n = \frac{1}{n!}$, $t_n = \frac{\alpha^{n}}{n!}$ and $s_{n}=\frac{(1- \alpha)^{n}}{n!}$,
 where $0< \alpha < 1$ then the sequence spaces $ X(r, s,t; \Delta)$ for $X\in\{ l_\infty, c_0, c\}$ reduce to the Euler difference sequence spaces $e^\alpha_\infty(\Delta)$,
 $e^\alpha_0(\Delta)$ and $e^\alpha_{c}(\Delta)$ respectively \cite{POL1}. By the above choice of $r, s$ and $ t$, we
 have $D_{0}^{(s)} =1$, $D_{1}^{(s)} =(1- \alpha)$, $D_{2}^{(s)} =\frac{(1-\alpha)^2}{2!}$, $D_{3}^{(s)} =\frac{(1-\alpha)^3}{3!}$ and so on. Therefore $D_{k}^{(s)} =\frac{(1-\alpha)^k}{k!}$, $k \in \mathbb{N}_0$.
 Thus the $\alpha$-dual of the Euler difference sequence spaces is the set
 \begin{align*}
 &\displaystyle\Big \{ a \in w : \sup_{K \in \mathcal{F}}\sum_{n}\bigg| \sum_{j \in K} \sum_{k =j}^{n} (-1)^{k-j}\frac{(1 -\alpha)^{k-j}} {(k-j)!}.
 \frac{1}{j!} \alpha^{-k} k! a_{n}\bigg | < \infty \Big \}\\
 &~= \displaystyle\Big \{ a \in w : \sup_{K \in \mathcal{F}}\sum_{n}\bigg| \sum_{j \in K} \sum_{k =j}^{n} (-1)^{k-j}\binom{k}{j}(1 -\alpha)^{k-j} \alpha^{-k}a_{n}
\bigg| < \infty \Big \}.
 \end{align*}
Here we illustrate that how the characterization of matrix transformation $A \in (e_{\infty}^{\alpha}(\Delta), l_{\infty})$ can be obtained with the help of Corollary 4.1(c)\\
\begin{align*}
 R_{k}(A_{n}) &= \sum_{j=k}^{\infty} a_{nj}s_{jk}\\
 & = r_k \bigg [ \frac{a_{nk}}{s_{0}t_k} + \Big( \frac{D_{0}^{(s)}}{t_k} - \frac{D_{1}^{(s)}}{t_{k+1}}\Big)\sum_{j=k+1}^{\infty} a_{nj} +
  \sum_{l=2}^{\infty} (-1)^{l} \frac{D_{l}^{(s)}}{t_{k+l}} \sum_{j= k+l}^{\infty} a_{nj}\bigg]\\
 & =  \frac{a_{nk}}{\alpha_k} + \Big( \frac{1}{\alpha^k} - \frac{1}{\alpha^{k+1}}\Big)\sum_{j=k+1}^{\infty} a_{nj} + \sum_{l=2}^{\infty} (-1)^{l} \binom{l+k}{l}\frac{(1 - \alpha)^{l}}{\alpha^{k+l}} \sum_{j= k+l}^{\infty} a_{nj}.
\end{align*}
\begin{align*}
 w_{mk}^{A_{n}} &= \sum_{j=m}^{\infty} a_{nj}s_{jk}\\
 & = r_k \bigg [\sum_{l=0}^{m-k} (-1)^{l} \frac{D_{l}^{(s)}}{t_{k+l}} \sum_{j=m}^{\infty} a_{nj}
  \sum_{l=m-k+1}^{\infty} (-1)^{l} \frac{D_{l}^{(s)}}{t_{k+l}} \sum_{j= k+l}^{\infty} a_{nj}\bigg ]\\
& = \frac{1}{k!}\bigg [ \sum_{l=0}^{m-k} (-1)^{l} \frac{(1 -
\alpha)^l (l+k)!}{l! \alpha^{l+k}}\sum_{j=m}^{\infty} a_{nj}
+\sum_{m-k+1}^{\infty} (-1)^{l}
\frac{(1 - \alpha)^l (l+k)!}{l! \alpha^{l+k}}\sum_{j=k+l}^{\infty} a_{nj}  \bigg ]\\
& = \bigg[ \sum_{l=0}^{m-k} (-1)^{l} \binom{l+k}{l}\frac{(1 -
\alpha)^l }{\alpha^{l+k}}\sum_{j=m}^{\infty} a_{nj}
+\sum_{m-k+1}^{\infty} (-1)^{l} \binom{l+k}{l}) \frac{(1 - \alpha)^l
}{\alpha^{l+k}}\sum_{j=k+l}^{\infty} a_{nj} \bigg ].
\end{align*}
 So $A \in (e_{\infty}^{\alpha}(\Delta), l_{\infty})$ if and only if\\
 $$\displaystyle\sup_{n} \sum_{k=0}^{\infty} |R_{k}(A_n)| < \infty$$
 and $$ \lim_{m \rightarrow \infty} \sum_{k=0}^{m}|w_{mk}^{A_{n}}| =0 \quad {\rm for~ all~} n.$$

 $(II)$~ We choose $s_n = 1$ $ \forall ~n$, $r_n = (n+1)$, $t_n = 1+ \alpha^{n}$, where $0< \alpha < 1$ then the sequence spaces $X(r, s, t; \Delta)$ for $X \in \{ l_{\infty}, c, c_{0} \}$
 reduce to $a_{\infty}^{\alpha}(\Delta)$, $a_{c}^{\alpha}(\Delta)$ and $a_{0}^{\alpha}(\Delta)$ respectively. With this choice $D_0^{(s)}=1=D_1^{(s)}$ and
 $D_k^{(s)}=0$ for all $k\geq2$. Therefore the matrices $R_{k}(A_{n})$ and $W^{A_n} =(w_{mk}^{A_n})$ become
 $$ R_{k}(A_{n}) = (k+1) \bigg [\frac{a_{nk}}{1 + \alpha^{k}} + \Big( \frac{1}{1 + \alpha^{k}} - \frac{1}{1 + \alpha^{k +1}}  \Big )
 \sum_{j =k+1}^{\infty}a_{nj}\bigg ],$$
 and
 \begin{align*}
 w_{mk}^{A_{n}} &= r_{k} \bigg[ \Big( \frac{D_{0}^{(s)}}{ t_k} - \frac{D_{1}^{(s)}}{ t_{k+1}} \Big) \sum_{j=m }^{\infty}a_{nj} -
 \frac{D_{1}^{(s)}}{ t_{m+1}} \sum_{j=m+1 }^{\infty}a_{nj}\bigg]\\
&=  (k+1) \bigg [ \Big( \frac{1}{ 1 +\alpha^{k}} - \frac{1}{ 1
+\alpha^{k+1}} \Big) \sum_{j=m }^{\infty}a_{nj} -  \frac{1}{ 1
+\alpha^{m+1}} \sum_{j=m+1 }^{\infty}a_{nj} \bigg ].
\end{align*} By evaluating, we have
$\displaystyle \sum_{k=0}^{m}|w_{mk}^{A_{n}}| = \sum_{k=0}^{m} (k+1)  \Big( \frac{1}{ 1 +\alpha^{k}} - \frac{1}{ 1 +\alpha^{k+1}}
\Big) \Big|\sum_{j=m }^{\infty}a_{nj}\Big| + |a_{nm}| \frac{m+1}{1+ \alpha^{m+1}}$.\\

Therefore by Corollary 4.1(a), we have $A \in (a_{\infty}^{\alpha}(\Delta), c_{0})$ if and only if
$$ \displaystyle \lim _{n \rightarrow \infty} \sum_{k=0}^{\infty}|R_{k}(A_{n})| =0$$ and
$$ \displaystyle \lim _{m \rightarrow \infty}\sum_{k=0}^{m}|w_{mk}^{A_{n}}| =0 \mbox{~for all~} n.$$

\end{document}